\numberwithin{equation}{section}
\numberwithin{figure}{section}
\newtheorem{theorem}{Theorem}[section]
\newtheorem{proposition}[theorem]{Proposition}
\newtheorem{lemma}[theorem]{Lemma}
\theoremstyle{definition}
\newtheorem{example}[theorem]{Example}
\newtheorem{remark}[theorem]{Remark}
\newtheorem{question}[theorem]{Question}
\definecolor{myblue}{rgb}{0.6, 0.9, 1}
\newcommand{\Rmnum}[1]{\expandafter\@slowromancap\romannumeral #1@}
\definecolor{myblue}{rgb}{0.6, 0.9, 1}
\definecolor{mygreen}{rgb}{0,0,1}
\definecolor{purple}{rgb}{0.6,0.2,1}
\definecolor{orange}{rgb}{0.8,0,0.2}
\newcommand{\bC}{\mathbb{C}}
\newcommand{\bP}{\mathbb{P}}
\newcommand{\C}{\mathbb{C}}
\newcommand{\Q}{\mathbb{Q}}
\newcommand{\bQ}{\mathbb{Q}}
\newcommand{\bD}{\mathbb{D}}
\newcommand{\eps}{\varepsilon}
\newcommand{\la}{\lambda}
\newcommand{\al}{\alpha}
\newcommand{\Qbar}{\overline{\bQ}}
\newcommand{\supp}{\operatorname{supp}}
\newcommand{\cL}{\mathcal{L}}
\newcommand{\<}{\langle}
\renewcommand{\>}{\rangle}
\newcommand\Mbar {\overline{M}}
\newcommand\iso{\simeq}
\newcommand{\Z}{\mathbb{Z}}
\newcommand\M{\mathrm{M}}
\newcommand\PCF{\mathrm{PCF}_d}
\newcommand{\cX}{\mathcal{X}}
\newcommand{\cC}{\mathcal{C}}
\newcommand{\cY}{\mathcal{Y}}
\newcommand{\cM}{\mathcal{M}}
\newcommand{\Tbif}{T_{\mathrm{bif}}}
\newcommand{\Per}{\mathrm{Per}}
\newcommand{\Ch}{\mathrm{Ch}}
\newcommand{\hcrit}{\hat{h}_{\mathrm{crit}}}
\newcommand{\Spec}{\operatorname{Spec}}
\begin{document}

\title[Bounded geometry]{Bounded geometry for PCF-special subvarieties}
	
\author{Laura De Marco, Niki Myrto Mavraki, and Hexi Ye}
\email{demarco@math.harvard.edu}
\email{myrto.mavraki@utoronto.ca}
\email{yehexi@zju.edu.cn}

	
\begin{abstract}
For each integer $d\geq 2$, let $\M_d$ denote the moduli space of maps $f: \bP^1\to \bP^1$ of degree $d$.  We study the geometric configurations of subsets of postcritically finite (or PCF) maps in $\M_d$.  A complex-algebraic subvariety $Y \subset \M_d$ is said to be PCF-special if it contains a Zariski-dense set of PCF maps.  Here we prove that there are only finitely many positive-dimensional irreducible PCF-special subvarieties in $\M_d$ with degree $\leq D$.  In addition, there exist constants $N = N(D,d)$ and $B = B(D,d)$ so that for any complex algebraic subvariety $X \subset \M_d$ of degree $\leq D$, the Zariski closure $\overline{X \cap \PCF}$ has at most $N$ irreducible components, each with degree $\leq B$.  We also prove generalizations of these results for points with small critical height in $\M_d(\Qbar)$.
\end{abstract}	
	
\thanks{2020 {\em Mathematics Subject Classification}: 37F46, 37P30}
	
	\maketitle

\section{Introduction}

For each integer $d\geq 2$, let $\M_d$ denote the moduli space of maps $f: \bP^1\to \bP^1$ of degree $d$, defined over the field $\C$ of complex numbers.  The space $\M_d$ consists of M\"obius conjugacy classes of maps $f$; it is constructed as a complex orbifold in \cite{Milnor:quad} and as a scheme over $\Spec \Z$ in \cite{Silverman:Ratd}. Let $\PCF\subset \M_d$ denote the subset of postcritically finite maps; i.e., the conjugacy classes of maps $f$ for which each critical point has a finite forward orbit.  We say that an irreducible complex algebraic subvariety $X \subset \M_d$ is {\bf PCF-special} if $\PCF\cap X$ is Zariski dense in $X$. Note that $\M_d$ is itself PCF-special, as is the polynomial subvariety $\mathrm{MPoly}_d \subset \M_d$, as are all subvarieties defined by orbit relations among the critical points; see for example \cite[Proposition 2.6]{BD:polyPCF} and \cite[Theorem 6.2]{D:stableheight}.  Conjecturally, all PCF-special subvarieties are determined by critical orbit relations (in a general sense, accounting for symmetries among the maps) \cite[Conjecture 1.10]{BD:polyPCF}.  

In this article, we study the geometry of the set $\PCF$ by examining how it intersects with families of varieties in $\M_d$, and we prove uniform bounds on the degrees of PCF-special subvarieties in these families.  Here, and throughout this article, we measure degree with respect to a fixed choice of very ample line bundle on $\M_d$, for example providing the projective compactification of $\M_d$ defined in \cite{Silverman:Ratd}.  The degree bounds will depend on the choice, but the following result holds for any such choice:

\begin{theorem} \label{main1}
Fix integers $d\geq 2$ and $D\geq 1$.  There exist constants $B = B(D,d)$ and $N = N(D,d)$ so that for any complex algebraic subvariety $X \subset \M_d$ of degree $\leq D$, the Zariski closure 
	$$\overline{X \cap \PCF}$$ 
has at most $N$ irreducible components, each with degree $\leq B$.  
\end{theorem}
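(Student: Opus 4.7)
The plan is to deduce Theorem~\ref{main1} from the finiteness statement for positive-dimensional PCF-special subvarieties of bounded degree announced in the abstract, via a compactness argument in the Chow variety combined with arithmetic equidistribution for the critical height.

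The central claim to establish is a uniform degree bound: there exists $B = B(D,d)$ so that for every subvariety $X \subset \M_d$ of degree at most $D$, every irreducible component of $\overline{X \cap \PCF}$ has degree at most $B$. Granting this claim, the bound $N = N(D,d)$ on the number of components follows. The positive-dimensional components of $\overline{X \cap \PCF}$ are positive-dimensional PCF-special subvarieties of degree $\leq B$, and the first statement of the abstract provides a finite list of such in all of $\M_d$. The $0$-dimensional components---isolated PCF points of $X$ that lie on no positive-dimensional PCF-special subvariety of $X$---are controlled by a separate compactness argument: if there were $X_n \subset \M_d$ of degree $\leq D$ with arbitrarily many such isolated PCF points, then in any Chow limit $X_\infty$ those points must accumulate on a positive-dimensional subvariety $Z \subset X_\infty$; equidistribution for the critical height would force $Z$ to be PCF-special, and continuity along the Chow family would force, for large $n$, the alleged isolated points of $X_n$ to lie on a positive-dimensional PCF-special subvariety of $X_n$, contradicting their isolatedness.

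For the uniform degree bound, I argue by contradiction. Choose $X_n$ in the Chow variety $\mathcal{C}_D$ of degree-$\leq D$ effective cycles in the chosen compactification of $\M_d$, together with irreducible components $Y_n \subset \overline{X_n \cap \PCF}$ with $\deg Y_n \to \infty$. After passing to subsequences, arrange that $\dim Y_n = k$ is constant and $X_n \to X_\infty$ in $\mathcal{C}_D$. The rescaled cycles $\tfrac{1}{\deg Y_n} Y_n$ have mass one and, after a further subsequence, converge to an effective current supported on a subvariety $W \subset X_\infty$. Arithmetic equidistribution for the critical height, applied to the PCF points dense in each $Y_n$ and transported across the moving family $\mathcal{C}_D$, should identify $W$ with a PCF-special subvariety of $X_\infty$ of dimension strictly greater than $k$. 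Then $W$ contains the $Y_n$ for $n$ large, so $W$ contains infinitely many distinct positive-dimensional PCF-special subvarieties of bounded degree---contradicting the abstract's finiteness statement applied inside $W$ (or iterating this step inside a nested decreasing sequence of limits).

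The main obstacle is making rigorous the step that identifies the limit support $W$ as PCF-special and extracts enough containment of the $Y_n$ in $W$ to trigger the finiteness contradiction. This is an equidistribution statement for PCF-special subvarieties themselves, going beyond equidistribution of individual PCF points, and it will require an arithmetic equidistribution theorem for the critical height in the style of Yuan--Zhang applied to families over $\mathcal{C}_D$, together with an Ax-type argument inside $X_\infty$ to rule out accidental containment in a proper subvariety. This is where I expect the technical heart of the argument to lie, and it is presumably where the paper's bounded-geometry machinery is deployed.
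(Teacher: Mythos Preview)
Your proposal diverges from the paper's proof and contains gaps that do not appear to be repairable along the lines you sketch.

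The compactness-and-limits argument for the degree bound breaks at two points. First, a weak limit of rescaled $k$-cycles $\tfrac{1}{\deg Y_n}[Y_n]$ is a closed positive $(k,k)$-current, so its support has dimension at most $k$, not strictly greater; there is no mechanism by which unbounded degree produces a dimension jump in the limit. Second, and more seriously, the assertion that ``$W$ contains the $Y_n$ for $n$ large'' has no literal meaning: $W \subset X_\infty$ while $Y_n \subset X_n$ with $X_n \neq X_\infty$, so no containment can hold. Even reinterpreted in the total family over the Chow base, nothing forces the moving $Y_n$ to lie in a fixed PCF-special subscheme. The equidistribution theorem you invoke is about Galois orbits of small-height \emph{points} on a fixed variety converging to the bifurcation measure; it does not say that limits of PCF-special subvarieties are PCF-special, nor does it control Zariski closures in a moving family. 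Your argument for the $0$-dimensional components has the same structure and the same gap: accumulation points of isolated PCF points in varying $X_n$ need not assemble into a PCF-special subvariety of $X_\infty$, and even if they did, nothing transports that back to a positive-dimensional PCF-special subvariety of $X_n$ containing the original points. You are right that this is ``where the technical heart lies,'' but the heart is not an equidistribution-for-subvarieties statement of the kind you describe.

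For comparison, the paper does not deduce Theorem~\ref{main1} from Theorem~\ref{main2}; both are proved simultaneously, by induction on the fiber dimension $r$, from Theorem~\ref{relative}. For each irreducible component $V$ of $\Ch(\M_d, r, D)$ with $\ell = \dim V$, Theorem~\ref{relative} shows that the small-critical-height locus in the fiber power $\cX_V^{\ell+1}$ is not Zariski dense. A combinatorial lemma of Gao--Ge--K\"uhne then converts non-density of $\Sigma_\lambda^{\ell+1}$ in $X_\lambda^{\ell+1}$ into a uniform bound on the number and degree of components of the Zariski closure of $\Sigma_\lambda$ in each individual fiber $X_\lambda$, for $\lambda$ in a Zariski-open subset of $V$; one then recurses on $V \setminus U$ and on lower-dimensional strata via the induction hypothesis. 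The passage to fiber powers, rather than to limits of cycles, is what makes the argument go through.
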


\begin{theorem} \label{main2}
Fix integers $d\geq 2$ and $D \geq 1$.  There are only finitely many positive-dimensional irreducible PCF-special subvarieties in $\M_d$ with degree $\leq D$.    
\end{theorem}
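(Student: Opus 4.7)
\textbf{Proof plan for Theorem \ref{main2}.}
The plan is to prove Theorem \ref{main2} by contradiction, reducing to Theorem \ref{main1} via a Chow-variety compactness argument. Suppose there exist infinitely many distinct positive-dimensional irreducible PCF-special subvarieties $\{Y_i\}_{i\in\bN}$ of $\M_d$ with $\deg Y_i \leq D$. These correspond to points $\{[Y_i]\}$ in the Chow variety of cycles in $\overline{\M_d}$ of degree $\leq D$, a scheme of finite type with only finitely many irreducible components. After passing to a subsequence, I would assume all $[Y_i]$ lie in a single irreducible component $\mathcal{C}$; since $\{[Y_i]\}$ is an infinite subset of the Noetherian variety $\mathcal{C}$, its Zariski closure $T \subset \mathcal{C}$ has positive dimension, with $\{[Y_i]\}$ Zariski-dense in $T$.

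Next I would form the sweep $V := \overline{\pi_2(\mathcal{U}_T)} \subset \M_d$, where $\mathcal{U}_T \to T$ is the restricted universal family. Using fiberwise Zariski density of $\PCF$ in each $Y_i$ together with the density of $\{[Y_i]\}$ in $T$, the union $\bigcup_{[Y_i]\in T}(Y_i \cap \PCF)$ projects to a Zariski-dense subset of $V$; hence $V$ is PCF-special. Since distinct $[Y_t] \in T$ give distinct subvarieties, $\dim V > \dim Y_t$ for generic $t$, so $V$ strictly enlarges each member of the family.

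To conclude, I would invoke Theorem \ref{main1}. The idea is to choose a bounded-degree auxiliary subvariety $W \subset \M_d$—for example, a generic linear subvariety transverse to the family, of dimension complementary to $\dim Y_t$ within $V$—so that each intersection $W \cap Y_i$ yields a distinct positive-dimensional irreducible component of $\overline{W \cap \PCF}$. The infinite collection of $Y_i$ would then contradict the uniform bound $N(\deg W, d)$ from Theorem \ref{main1}, giving the desired contradiction.

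\textbf{Main obstacle.} The crux of the argument is the final step. Theorem \ref{main1} bounds the components of $\overline{X \cap \PCF}$ for any bounded-degree $X$, but the sweep $V$ of a positive-dimensional family of PCF-special subvarieties is itself PCF-special, and so the direct approach of applying Theorem \ref{main1} to $V$ itself fails: $\overline{V \cap \PCF} = V$ has few components, but yields no information about the nested subvarieties $Y_i \subset V$. A generic bounded-degree slice $W$ of $\M_d$ may intersect $V$ in a single PCF-special component rather than in infinitely many distinct pieces indexed by $i$, since the $Y_i$ form a continuously varying family inside $V$. Overcoming this requires careful incidence-geometric control: one must arrange $W$ so that each $W \cap Y_i$ is a distinct positive-dimensional PCF-special component of $\overline{W \cap \PCF}$, exploiting the finite-type structure of $\mathcal{C}$ and the transversality of a generic $W$ to separate the family members into distinct components of the PCF closure.
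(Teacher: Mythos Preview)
Your proposed reduction has a genuine gap, and it is precisely the one you flag as the ``main obstacle'': being PCF-special is not preserved under intersection with a generic (or even a carefully chosen bounded-degree) linear subvariety $W$. The set $\PCF$ is countable, so for a generic complex $W$ there is no reason $W\cap Y_i$ should contain a Zariski-dense set of PCF maps. Concretely, $\M_d$ itself is PCF-special, yet a generic hypersurface in $\M_d$ is not---indeed, that is essentially what DAO asserts. Thus you cannot arrange that each $W\cap Y_i$ is a positive-dimensional PCF-special component of $\overline{W\cap\PCF}$: for generic $W$ the slice $W\cap Y_i$ is simply not PCF-special, while for special choices (e.g.\ $W$ containing the sweep $V$, or $W$ itself PCF-special) the components of $\overline{W\cap\PCF}$ absorb many $Y_i$ at once rather than separating them. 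The ``careful incidence-geometric control'' you hope for does not exist, because the property you need to transport along the slice is arithmetic (density of a countable set), not purely geometric.

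The paper does \emph{not} deduce Theorem~\ref{main2} from Theorem~\ref{main1}; it proves both simultaneously from Theorem~\ref{relative} by Noetherian induction on the Chow variety $\Ch(\M_d,r,D)$. Over an irreducible $V\subset\Ch(\M_d,r,D)$ of dimension $\ell$, Theorem~\ref{relative} applied to the fiber power $\cX_V^{\ell+1}$, together with \cite[Lemma~4.3]{Gao:Ge:Kuhne}, produces a Zariski-open $U\subset V$ on which every fiber $X_\lambda$ has only boundedly many points of small critical height; in particular, no $X_\lambda$ with $\lambda\in U$ can be PCF-special. One then repeats over the lower-dimensional components of $V\setminus U$. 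The process terminates at a \emph{finite} subset of $\Ch(\M_d,r,D)(\Qbar)$, and these are the only possible PCF-special subvarieties of dimension $r$ and degree $\le D$. The key point is that Theorem~\ref{relative} is a statement about the \emph{family} $\cX\to V$ and directly forces PCF-special fibers into a proper Zariski-closed subset of the base; the conclusion of Theorem~\ref{main1}, which concerns a single $X$, discards this family information and does not by itself imply Theorem~\ref{main2}.
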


\begin{example} \label{Pern}
Consider the sequence of PCF-special curves $\Per_n(0)$ in $\M_2$, defined by the condition that one critical point of a quadratic rational map has exact period $n$. These were introduced by Milnor in \cite{Milnor:quad} and conjectured to be irreducible for every $n$.  While irreducibility remains open, Theorem \ref{main2} implies that the degrees of the irreducible components of $\Per_n(0)$ must be growing with $n$.  Theorem \ref{main2}  implies degree growth also of the irreducible components of the analogous hypersurfaces $\Per_n(0)$ in $\M_d$, for every $d\geq 2$.
\end{example}

\begin{example} \label{Per1}
Consider the family of curves $\Per_1(\lambda)$ in $\M_2$, parameterized by $\lambda \in \C$, consisting of all quadratic maps having a fixed point of multiplier $\lambda$.  This family was first introduced by Milnor in \cite{Milnor:quad}, and they are lines in the natural coordinates that Milnor introduced on $\M_2 \iso \C^2$.  It is known that $\Per_1(\lambda)$ is PCF-special if and only if $\lambda = 0$ \cite{DWY:QPer1}.  And although $\PCF$ is Zariski-dense in $\M_2$, Theorem \ref{main1} combined with the main result of \cite{DWY:QPer1} implies: {\em There is a uniform bound $N$ so that 
	$$\# \, \Per_1(\lambda) \cap \PCF \; \leq \; N$$
for all $\lambda \not= 0$ in $\C$.}  Theorem \ref{main1} also implies the corresponding result for the family of curves $\mathrm{Per}_n(\lambda)$ in the moduli space $\mathrm{MPoly}_3$ of cubic polynomials,  for any fixed $n\geq 1$, in view of \cite[Theorem 1.1]{BD:polyPCF} \cite[Theorem B]{Favre:Gauthier:cubics}.
\end{example}

In fact we prove stronger versions of Theorem \ref{main1} and Theorem \ref{main2} in the spirit of the Bogomolov Conjecture \cite{Ullmo:Bogomolov, Zhang:Bogomolov} or, more recently, of the ``height gap principle'' \cite{Gao:Ge:Kuhne, Gao:survey} in the setting of abelian varieties, for subvarieties of $\M_d$ defined over $\Qbar$.  The {\bf critical height} of a conjugacy class $[f]\in \M_d(\Qbar)$ is defined by
	$$\hcrit(f)=\sum_{\{c~:~f'(c)=0\}}\hat{h}_f(c) \; \ge \; 0,$$
for any representative $f$ defined over $\Qbar$, where the critical points are counted with multiplicity and $\hat{h}_f$ is the Call-Silverman canonical height on $\bP^1(\Qbar)$ \cite{Call:Silverman} \cite{Silverman:moduli}.  Ingram proved that $\hcrit$ is comparable to a Weil height on the open subset $\M_d \setminus \cL_d$ where $\cL_d$ is the locus of flexible Latt\`es maps \cite{Ingram:criticalheight}. A more explicit version of Ingram's result was later established in \cite[Theorem C]{Gauthier:Okuyama:Vigny}.  A different approach is given by the Yuan--Zhang theory of adelic line bundles; see \cite[Remark 6.3.3]{Yuan:Zhang:quasiprojective}.  
The PCF points in $\M_d(\Qbar)$ are precisely those with $\hcrit(f) = 0$. 

To state our results, we fix a projective embedding of $\M_d$ defined over $\Q$.  We also fix height functions $h_{\Ch}$ on the associated Chow varieties $\Ch(\M_d, r, D)$ of cycles of dimension $r$ and degree $\leq D$ in $\M_d$, for example a Philippon height \cite{Philippon1}; see \S\ref{chow} for definitions.  The constants $B$, $N$, and $\epsilon$ of the following theorems will depend on these choices.

\begin{theorem} \label{main1Bogomolov}
Fix integers $d\geq 2$ and $D\geq 1$.  There exist constants $B = B(D,d)$, $N= N(D,d)$, and $\epsilon=\epsilon(D,d) >0$ so that for any irreducible algebraic subvariety $X \subset \M_d$ of degree $\leq D$ defined over $\Qbar$, and every $0\le\epsilon'<\epsilon$, the Zariski closure 
 	$$\overline{X \cap \{z\in \M_d(\Qbar)~:~\hcrit(z) \leq \epsilon'\max\{1,h_{\Ch}(X)\}\}}$$ 
has at most $N$ irreducible components, each with degree $\leq B$.  
\end{theorem}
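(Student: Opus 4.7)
The plan is to combine Theorem \ref{main2} (used as a key input) with the adelic intersection theory of Yuan--Zhang for quasiprojective varieties \cite{Yuan:Zhang:quasiprojective}, applied to the critical height $\hcrit$. First, I would assemble $\hcrit$ into a nef adelic line bundle $\overline{L}_{\mathrm{crit}}$ on a suitable compactification of $\M_d$, building on Ingram's comparison \cite{Ingram:criticalheight} as in \cite[Remark 6.3.3]{Yuan:Zhang:quasiprojective}. The Arakelov self-intersection then defines a height $h_{\overline{L}_{\mathrm{crit}}}(Y)$ on irreducible subvarieties $Y \subset \M_d$ defined over $\Qbar$, whose essential infimum on $Y(\Qbar)$ equals the essential infimum of $\hcrit|_{Y(\Qbar)}$.

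By Zhang's successive-minima inequality, any component $Y$ of the Zariski closure in the statement satisfies
$$h_{\overline{L}_{\mathrm{crit}}}(Y) \;\leq\; (\dim Y + 1)\, \deg_{L_{\mathrm{crit}}}(Y) \cdot \epsilon'\max\{1, h_{\Ch}(X)\},$$
because $Y$ carries a Zariski dense set of points with $\hcrit \leq \epsilon'\max\{1, h_{\Ch}(X)\}$. When $\epsilon' = 0$ the Arakelov height of $Y$ vanishes, so $Y$ is PCF-special by Yuan--Zhang equidistribution, and the bound follows from Theorem \ref{main2}. The essential new content of Theorem \ref{main1Bogomolov} is thus to extract a uniform gap $\epsilon > 0$ below which the same conclusion persists.

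To produce the gap, I would argue by contradiction: if the theorem failed, there would exist sequences $\epsilon'_n \downarrow 0$ and irreducible $X_n \subset \M_d$ of degree $\leq D$ whose associated closures violate either the bound on the number of components or the bound on their degrees, witnessed by components $Y_n$. Using compactness of the Chow variety parametrizing cycles of bounded degree in $\M_d$, together with continuity of the Yuan--Zhang height along such limits, one extracts a limit cycle $Y_\infty$ of bounded degree. The displayed inequality above, renormalized by dividing through by $\max\{1, h_{\Ch}(X_n)\}$, forces $h_{\overline{L}_{\mathrm{crit}}}(Y_\infty) = 0$ in the limit; hence $Y_\infty$ is PCF-special. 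Theorem \ref{main2} then contradicts the production of infinitely many such $Y_\infty$ of bounded degree.

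The main obstacle is the Latt\`es locus $\cL_d \subset \M_d$: here $\hcrit$ vanishes identically, so Ingram's comparison with a Weil height breaks down and $\overline{L}_{\mathrm{crit}}$ degenerates. One must handle $\cL_d$ separately, either by an induction on dimension (applying the theorem to subvarieties of $\cL_d$) or by modifying the adelic line bundle to restore the comparison. A secondary subtlety is calibrating the normalization factor $\max\{1, h_{\Ch}(X)\}$ against the Yuan--Zhang self-intersection of $X$, so that the renormalization in the limit argument produces a genuinely nontrivial limit cycle $Y_\infty$ rather than collapsing the estimate to a triviality.
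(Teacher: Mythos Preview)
Your approach differs substantially from the paper's and has genuine gaps. The paper does \emph{not} deduce Theorem~\ref{main1Bogomolov} from Theorem~\ref{main2} via a limit argument; rather, all four main theorems are proved simultaneously in Section~\ref{main proofs} as consequences of Theorem~\ref{relative}. Concretely, the paper parametrizes degree-$\leq D$, dimension-$r$ subvarieties by (components of) the Chow variety $V \subset \Ch(\M_d, r, D)$, applies Theorem~\ref{relative} to the tautological family $\cX \to V$ to conclude that small-height points are not Zariski dense in the fiber power $\cX_V^{\ell+1}$, and then uses \cite[Lemma~4.3]{Gao:Ge:Kuhne} to descend this non-density to a uniform bound on the Zariski closure of small-height points in each individual fiber $X_\lambda$. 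Noetherian induction on $V$ and induction on $r$ handle the rest. The $\epsilon$ and the bounds $B,N$ come out together from this process; Theorem~\ref{main2} is not an input but a byproduct.

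Your compactness argument fails at two points. First, and most seriously, there is no a~priori bound on the degree of the components $Y_n$: a subvariety of a degree-$\leq D$ variety $X_n$ can have arbitrarily large degree, so if the theorem fails because $\deg Y_n \to \infty$, you cannot invoke Chow-variety compactness to extract a limit $Y_\infty$ at all. The degree bound $B$ is precisely what the theorem asserts, so assuming it is circular; and Theorem~\ref{main2} with parameter $D$ says nothing about PCF-special varieties of degree larger than $D$. Second, even granting bounded degree, a convergent subsequence produces a \emph{single} limit cycle $Y_\infty$, not infinitely many; exhibiting one PCF-special variety of bounded degree does not contradict Theorem~\ref{main2}. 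To get a contradiction you would need infinitely many \emph{distinct} such limits, or to show that the $Y_n$ themselves are eventually PCF-special, and the sketch does not supply either. Finally, the ``continuity of the Yuan--Zhang height along Chow limits'' that you invoke is not available in this generality and is itself a hard statement.
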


The {\bf essential minimum} of the critical height on a subvariety $Y\subset\M_d$ defined over $\Qbar$ is 
$$\mathrm{ess.min.}(\hcrit|_Y) \; := \;\sup_{Y'\subset Y} \inf_{y \in (Y\setminus Y')(\Qbar)} \hcrit(y)  \; \ge \; 0,$$
where the supremum is taken over all proper subvarieties $Y'$ in $Y$ defined over $\Qbar$.  Note that all PCF-special subvarieties $Y\subset \M_d$ are defined over $\Qbar$ and have essential minimum 0, because they contain a Zariski-dense set of PCF points that can be defined over $\Qbar$; see \S\ref{Thurston}. 
We also prove the following result, strengthening Theorem \ref{main2}. 

\begin{theorem} \label{main2Bogomolov} 
Fix integers $d\geq 2$ and $D \geq 1$. There exists $\epsilon''=\epsilon''(d,D) > 0$ such that there are only finitely many positive-dimensional irreducible subvarieties $Y\subset \M_d$ defined over $\Qbar$ with degree $\leq D$ and 
$\mathrm{ess.min.}(\hcrit|_Y) \leq \epsilon''\max\{1,h_{\Ch}(Y)\}.$
 \end{theorem}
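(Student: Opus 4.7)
The plan is to use the Yuan--Zhang arithmetic intersection theory \cite{Yuan:Zhang:quasiprojective} to convert the essential-minimum hypothesis into a uniform upper bound on the Chow height $h_{\Ch}(Y)$, and then to conclude by a Northcott-type finiteness statement on the Chow variety together with the degree bound $\deg(Y)\leq D$.

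By \cite[Remark 6.3.3]{Yuan:Zhang:quasiprojective}, the critical height is induced by a nef adelic metrized line bundle $\overline{\cL}$ on $\M_d$ satisfying $h_{\overline{\cL}}(y)=\hcrit(y)$ for every $y\in \M_d(\Qbar)$.  Zhang's inequality in this framework gives
$$\mathrm{ess.min.}(\hcrit|_Y) \; \geq \; \frac{h_{\overline{\cL}}(Y)}{(1+\dim Y)\,\deg_{\cL}(Y)}.$$
Since $\dim Y\leq \dim\M_d$ and $\deg_{\cL}(Y)$ is controlled linearly by $\deg(Y)\leq D$, the hypothesis of the theorem yields
$$h_{\overline{\cL}}(Y) \; \leq \; C_1(d,D)\,\epsilon''\,\max\{1,h_{\Ch}(Y)\}.$$
On the other hand, Ingram's pointwise comparison \cite{Ingram:criticalheight} of $\hcrit$ with a Weil height on $\M_d\setminus \cL_d$ can be upgraded, within the adelic-line-bundle formalism, to a comparison at the level of subvariety heights of the form
$$h_{\overline{\cL}}(Y) \; \geq \; C_2\,h_{\Ch}(Y)\;-\;C_3,$$
with constants $C_2>0$ and $C_3\geq 0$ depending only on $d$ and $D$.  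Choosing $\epsilon''=\epsilon''(d,D)$ so small that $C_1\epsilon''<C_2/2$, the two displays combine to force $h_{\Ch}(Y)\leq 2C_3/C_2$, an absolute bound depending only on $d$ and $D$.

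To finish, the set of irreducible subvarieties of $\M_d$ defined over $\Qbar$ of degree $\leq D$ and Chow height at most $2C_3/C_2$ is finite by Northcott's theorem applied to the Chow points in $\Ch(\M_d,r,D)$ for each relevant dimension $r$.  The main obstacle is the height-comparison step: upgrading Ingram's pointwise bound to the intersection-theoretic comparison $h_{\overline{\cL}}(Y)\geq C_2 h_{\Ch}(Y)-C_3$, uniformly in $Y$ of bounded degree, requires a careful analysis of the behavior of $\overline{\cL}$ near the flexible Latt\`es locus $\cL_d$, where $\hcrit$ is not bounded relative to a Weil height on the projective compactification of $\M_d$.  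This is precisely the type of asymptotic analysis for which the quasi-projective adelic-line-bundle machinery of Yuan--Zhang is designed; a secondary technical point is ensuring that the Northcott argument applies to irreducible subvarieties defined over $\Qbar$ via their (canonically defined) Chow points.
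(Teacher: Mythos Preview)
Your proposal has a genuine gap, and it is not where you think. The step you flag as the ``main obstacle'' --- lifting Ingram's pointwise comparison to the subvariety-height inequality $h_{\overline{\cL}}(Y)\geq C_2 h_{\Ch}(Y)-C_3$ --- can actually be bypassed entirely: combining Ingram's pointwise bound $\hcrit(f)\geq c_1 h_A(f)-c_3$ on $\M_d\setminus\cL_d$ directly with Zhang's inequality for the \emph{ample} height $h_A$ gives
\[
\mathrm{ess.min.}(\hcrit|_Y)\;\geq\; c_1\,\mathrm{ess.min.}(h_A|_Y)-c_3\;\geq\;\frac{c_1}{(\dim Y+1)\deg_A Y}\, h_A(Y)\;-\;c'
\]
for every $Y\not\subset\cL_d$, and $h_A(Y)$ is comparable to $h_{\Ch}(Y)$. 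Combined with the hypothesis this already bounds $h_{\Ch}(Y)$, without ever comparing $h_{\overline{\cL}}(Y)$ to $h_{\Ch}(Y)$.

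The real gap is the step you dismiss as ``secondary'': the Northcott argument. You have bounded $h_{\Ch}(Y)$, but $Y$ is only a $\Qbar$-point of $\Ch(\M_d,r,D)$, and Northcott requires in addition a bound on $[\Q(Y):\Q]$. Nothing in the hypotheses controls the field of definition of $Y$: small essential minimum only says $Y$ contains a Zariski-dense set of small-$\hcrit$ points, and such points may all have arbitrarily large degree over $\Q$ (think of roots of unity on $\bP^1$), so $Y$ itself need not be defined over a number field of bounded degree. Bounded height alone does not give finiteness for $\Qbar$-points of the Chow variety, and your argument halts here. The paper never invokes Northcott: it applies Theorem~\ref{relative} to the tautological family over each irreducible component $V$ of the Chow variety, and uses the non-density of small-height tuples in the fiber power $\cX_V^{\ell+1}$ --- established via equidistribution and the parabolic versus prerepelling measure analysis of Section~\ref{proof} --- together with \cite[Lemma~4.3]{Gao:Ge:Kuhne} to strip away Zariski-open subsets of $V$ inductively until only finitely many parameters remain. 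Your height bound corresponds roughly to the height-inequality step~\eqref{big height gap} of that argument, but the finiteness among subvarieties of bounded Chow height is exactly where the deeper dynamical input of Theorem~\ref{relative} is required.
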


\subsection{History and context}
The study of the PCF-special varieties in $\M_d$ grew out of parallels between the theory of complex dynamical systems and the study of abelian varieties.  A conjectural classification of PCF-special subvarieties in $\M_d$ first appears in \cite[Chapter 6]{Silverman:moduli} where it was given the name of ``Dynamical Andr\'e-Oort" (or ``DAO" as it is known in \cite{Ji:Xie:DAO}), because the geometry of the PCF locus in $\M_d$ shares features with the locus of CM points in a moduli space of abelian varieties; see \cite{Zannier:book} for background on several Unlikely Intersection problems.  The first cases of DAO were proved by Baker-DeMarco \cite{BD:polyPCF} and Ghioca-Hsia-Tucker \cite{Ghioca:Hsia:Tucker}, followed by a series of results mostly focused on polynomial dynamics where the relations and symmetries are understood, such as \cite{Ghioca:Ye:cubics, Favre:Gauthier:cubics}, culminating in a complete classification of PCF-special curves in moduli spaces of polynomials by Favre-Gauthier \cite{Favre:Gauthier:book} and a characterization of PCF-special curves in $\M_d$ by Ji-Xie \cite{Ji:Xie:DAO}.  We note, however, that there are currently no known classification results for PCF-special subvarieties of dimensions $> 1$, even when restricting to spaces of polynomials.  Nevertheless, Theorem \ref{main1} can be viewed as a ``Uniform DAO" that holds for PCF-special subvarieties of any dimension.

The questions addressed here are inspired by Pink's conjectures \cite{Pink:conjecture} in the setting of families of abelian varieties, and the uniformity results about subgroup schemes in families of abelian varieties, for example as obtained recently in \cite{Kuhne:UML, DGH:uniformity, Gao:Ge:Kuhne, Gao:Habegger:RMM, Yuan:uniform}.  The analogous study of preperiodic subvarieties for regular self-maps of projective spaces (or more generally, for polarized dynamical systems, which then includes the geometry of subgroups in abelian varieties) was initiated by Zhang (see, for example, \cite{Zhang:distributions}).  Uniform bounds on the geometry of preperiodic points appeared in \cite{DKY:UMM, DKY:quad, DM:Mandelbrot, Mavraki:Schmidt}.

\begin{remark}
We emphasize that the aforementioned uniformity results each relied crucially on the classification of the `special' varieties in their respective settings. In contrast, the classification of PCF-special subvarieties is only conjectural \cite{BD:polyPCF}; we obtain uniformity without proving that PCF-special subvarieties are necessarily defined by critical orbit relations.  In particular, we do not recover (or use) the main result of \cite{Ji:Xie:DAO} for curves in $\M_d$, and each of our theorems is new also for curves in $\M_d$.  We do not know of any other result in the literature where uniformity is known without the classication.
\end{remark}

A far-reaching conjecture in \cite{DM:UDMM} implies the classification of PCF-special subvarieties (as formulated in \cite[Conjecture 1.10]{BD:polyPCF}) and the uniform bounds on their geometry (our Theorems \ref{main1} and \ref{main2}) and many results about geometry of torsion points in abelian varieties as special cases. It also contains the (partial) uniformity results obtained in \cite{DM:commonprep} and \cite{Gauthier:Taflin:Vigny}.  

Finally, we remark that we do not give explicit bounds on $B$ and $N$ in Theorem \ref{main1}, nor on the number of PCF-special subvarieties in Theorem \ref{main2}.  It would be very interesting to know how large these values must be and how they depend on $d$ and $D$ (and the dimension of the PCF-special subvarieties). In different contexts, some effective bounds have been given in \cite{DKY:quad, David:Philippon:2007, DM:Mandelbrot}.

\subsection{Proof strategy} \label{strategy}
We build upon the theory of arithmetic equidistribution for heights on quasiprojective varieties \cite{Kuhne:UML, Gauthier:goodheights, Yuan:Zhang:quasiprojective}, and we make use of complex-dynamical tools for analyzing bifurcation measures as in \cite{Buff:Epstein:PCF, Dujardin:higherbif}. 

We consider families $\mathcal{X}\to V$ of $r$-dimensional varieties $X_{\lambda}\subset \M_d$, parametrized by $\lambda$ in an irreducible quasi-projective variety $V$ of dimension $\ell\ge 1$.   We say that a family $\mathcal{X}\to V$ is {\bf maximally varying} if the natural map from $V$ to the Chow variety $\Ch(\M_d, r, D)$ has finite fibers, where $\Ch(\M_d, r, D)$ is a space of cycles of $\M_d$ with dimension $r$ and degree $\leq D$, with respect to a fixed projective embedding of $\M_d$ defined over $\Q$; see \S\ref{chow}.  

For any positive integer $m$, we let $\cX_V^m$ denote the $m$-th fiber power of $\cX$ over $V$.  We denote elements of $\cX_V^{m}$ by $(\lambda, f_1, \ldots, f_m)$, with $f_i \in X_\lambda$ for each $i$.  Our main theorems are derived from the following result, which is inspired from the Relative Bogomolov Conjecture \cite{RBC:DGH} and by results in \cite{Kuhne:UML} and \cite{Gao:Ge:Kuhne}.

\begin{theorem}\label{relative}
Let $\mathcal{X}\to V$ be a maximally varying family of dimension $r\ge 1$ varieties in $\M_d$, parametrized by an irreducible quasi-projective variety $V$ of dimension $\ell\ge 1$, all defined over $\Qbar$.  Assume that the generic fiber of $\cX\to V$ is geometrically irreducible. There exists $\epsilon=\epsilon(\mathcal{X})>0$ such that the set 
 $$Z_\eps(\cX) := \left\{(\lambda,f_1,\ldots,f_{\ell+1})\in \mathcal{X}_V^{\ell+1}(\Qbar)~:~ \sum_{i=1}^{\ell+1}\hcrit(f_i)<\epsilon \max\{1,h_{\Ch}(\lambda)\}\right\} $$
is not Zariski dense in $\cX_V^{\ell+1}$.
\end{theorem}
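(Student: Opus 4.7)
The plan is to reduce Theorem \ref{relative} to the quasi-projective Bogomolov-type inequality of Yuan-Zhang \cite{Yuan:Zhang:quasiprojective} by exhibiting an adelically metrized line bundle on $\cX_V^{\ell+1}$ whose associated height is (up to a bounded error) $\sum_{i=1}^{\ell+1}\hcrit(f_i)-\epsilon\max\{1,h_{\Ch}(\lambda)\}$ and which is \emph{non-degenerate}, i.e.\ has strictly positive top arithmetic self-intersection on each geometric component of $\cX_V^{\ell+1}$. By Ingram's comparison between $\hcrit$ and Weil heights and the adelic formalism of Yuan-Zhang, there is an adelic line bundle $\overline{L}_{\mathrm{crit}}$ on $\M_d$ whose height is $\hcrit$ and whose fiberwise complex-analytic curvature is a multiple of the Buff-Epstein-Dujardin bifurcation current \cite{Buff:Epstein:PCF,Dujardin:higherbif}. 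Let $\overline{L}_{\Ch}$ denote an ample adelic line bundle on a projective compactification of $V$ whose height is comparable to $h_{\Ch}$.

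First I would set, on $\cX_V^{\ell+1}$,
$$\overline{M}_\epsilon \;:=\; \sum_{i=1}^{\ell+1}\pi_i^*\overline{L}_{\mathrm{crit}} \;-\; \epsilon\,\pi_V^*\overline{L}_{\Ch},$$
where $\pi_i\colon\cX_V^{\ell+1}\to\M_d$ is the $i$-th fiber projection and $\pi_V\colon\cX_V^{\ell+1}\to V$ is the structure map. Since the generic fiber of $\cX\to V$ is geometrically irreducible, $\cX_V^{\ell+1}$ is irreducible. Every point of $Z_\eps(\cX)$ has nonpositive $\overline{M}_\epsilon$-height up to a bounded error absorbed by $\max\{1,\cdot\}$; so Zariski density of $Z_\eps(\cX)$ would force the essential minimum of $\overline{M}_\epsilon$ on $\cX_V^{\ell+1}$ to be $\le 0$. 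The quasi-projective arithmetic Bogomolov inequality of Yuan-Zhang then forces the top arithmetic self-intersection of $\overline{M}_\epsilon$ to be $\le 0$.

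The core calculation is to show that for $\epsilon>0$ small, this top arithmetic self-intersection is strictly positive, giving the desired contradiction. With $N=\dim\cX_V^{\ell+1}=\ell+(\ell+1)r$, the leading term is the self-intersection
$$\Big(\sum_{i=1}^{\ell+1}\pi_i^*\overline{L}_{\mathrm{crit}}\Big)^{N+1}.$$
Expanding multinomially and using that $\pi_i^*\overline{L}_{\mathrm{crit}}$ has no curvature in the factor directions $j\neq i$, the only surviving terms place exactly $r$ copies of $\pi_i^*\overline{L}_{\mathrm{crit}}$ on each of the $\ell+1$ fiber factors and distribute the remaining copies among the base directions. Pushing forward by $\pi_{V*}$, and using that for a Zariski-open set of $\lambda\in V$ the $r$-fold self-intersection $(\overline{L}_{\mathrm{crit}}|_{X_\lambda})^{r}$ is a nonzero positive bifurcation measure on the geometrically irreducible fiber $X_\lambda$, the computation reduces to an integral over $V$ of an $(\ell,\ell)$-bifurcation current $T_{\cX/V}^{\ell}$. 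The \emph{maximally varying} hypothesis is precisely what should force $T_{\cX/V}^{\ell}$ to be non-degenerate on $V$: the classifying map $V\to\Ch(\M_d,r,D)$ has finite fibers, so the fibers $X_\lambda\subset\M_d$ genuinely vary in all $\ell$ base directions, and a Dujardin-style argument \cite{Dujardin:higherbif} should upgrade this algebraic non-degeneracy to positivity of $T_{\cX/V}^{\ell}$.

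The main obstacle will be the last step: converting the algebraic maximal-variation hypothesis into strict positivity of the arithmetic top self-intersection $(\sum_i\pi_i^*\overline{L}_{\mathrm{crit}})^{N+1}$. This requires showing that the fiberwise bifurcation measure $(\overline{L}_{\mathrm{crit}}|_{X_\lambda})^{r}$ does not ``collapse'' under the Chow-classifying map, so that its variation in $\lambda$ detects positivity in all base directions in both the archimedean and non-archimedean places simultaneously. Once this non-degeneracy is in hand, Siu-type estimates comparing the perturbation $\epsilon\,\pi_V^*\overline{L}_{\Ch}$ against $(\sum_i\pi_i^*\overline{L}_{\mathrm{crit}})^{N+1}$ let us select $\epsilon=\epsilon(\cX)>0$ small enough that positivity survives, contradicting Zariski density of $Z_\eps(\cX)$ and proving Theorem \ref{relative}.
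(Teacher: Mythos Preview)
Your plan conflates two different positivity notions, and the gap between them is exactly where the real work of the paper lies. Non-degeneracy in the sense of Yuan--Zhang means that the \emph{geometric} top power of the curvature, $c_1(\overline{M})^{\wedge N}$ with $N=\dim\cX_V^{\ell+1}$, is a nonzero measure; this is what a Dujardin-type transversality argument can give you, and it is what the paper establishes in Propositions~\ref{bi-bif} and \ref{two measures}. Plugging non-degeneracy into \cite[Theorem~1.3.2]{Yuan:Zhang:quasiprojective} yields only the height inequality $\sum_i\hcrit(f_i)\ge \epsilon_1 h_{\Ch}(\lambda)-\epsilon_2$ on a Zariski open set, with an additive loss $\epsilon_2$. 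This rules out Zariski-dense small points with $h_{\Ch}(\lambda)$ large, but it says nothing about a potential generic sequence with $\sum_i\hcrit(f_i)\to 0$ and $h_{\Ch}(\lambda)$ bounded. What you actually assert is the strict positivity of the \emph{arithmetic} $(N{+}1)$-fold self-intersection $\big(\sum_i\pi_i^*\overline{L}_{\mathrm{crit}}\big)^{N+1}$; by the fundamental inequality this is equivalent to a strictly positive essential minimum for $\sum_i\hcrit(f_i)$ on $\cX_V^{\ell+1}$, which is precisely the statement you are trying to prove. Your ``Dujardin-style'' and ``maximal variation'' gestures do not touch this: they concern supports of bifurcation currents, not arithmetic intersection numbers, and there is no mechanism offered for producing the extra archimedean/non-archimedean positivity in the $(N{+}1)$-st slot.

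The paper closes this gap by an entirely different route that your proposal omits. It builds \emph{two} nef non-degenerate bundles $\overline{M}_1,\overline{M}_2$ on $\cX_V^{\ell+1}$ using two distinct $r$-tuples of marked critical points chosen via Proposition~\ref{two measures}. If a generic small sequence existed, Yuan--Zhang equidistribution \cite[Theorem~5.4.3]{Yuan:Zhang:quasiprojective} would force the two curvature measures $\mu_\ell^i$ and $\mu_\ell^j$ to be proportional; slicing down to the fibers $X_\lambda$ and then zooming in at a carefully chosen parameter where one critical tuple is transversely prerepelling while the other has a critical point attracted to a parabolic cycle (Lemma~\ref{r zoom}) shows that no such proportionality is possible. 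This dynamical dichotomy is the genuine content of the proof, and nothing in your outline substitutes for it. A secondary issue is that your $\overline{M}_\epsilon$ is not nef, so the Bogomolov/fundamental inequality does not apply to it as written; one would need a bigness argument, but that again presupposes the arithmetic positivity you have not established.
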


To prove Theorem \ref{relative}, we first pass to a maximally varying family of varieties $\cX' \to V$ in a finite branched cover $\cM_d^{cm} \to \M_d$ that parameterizes maps on $\bP^1$ with marked critical points $c_1, \ldots, c_{2d-2}: \cM_d^{cm} \to \bP^1$.  We construct two nonzero bifurcation measures on the fiber power $(\cX')_V^{\ell+1}$, from two distinct $r$-tuples of critical points, and, associated to these measures, we define two adelically metrized line bundles on the quasiprojective $(\cX')_V^{\ell+1}$, in the sense of \cite{Yuan:Zhang:quasiprojective}.  The construction of the measures is carried out in Section \ref{non-degeneracy}, and their nonvanishing ultimately comes from properties of the bifurcation current and its powers in $\M_d$ from  \cite{McMullen:families, D:current, Dujardin:higherbif, Gauthier:Okuyama:Vigny}.  The nonvanishing of the measures implies that the two metrized line bundles are non-degenerate (as defined in \cite{Yuan:Zhang:quasiprojective}).  We are thus able to deduce from the height inequality of \cite[Theorem 1.3.2]{Yuan:Zhang:quasiprojective} that, if $\epsilon >0$ is sufficiently small, then ``most" points of $Z_\epsilon(\cX)$ lie in subvarieties $X_\lambda$ of bounded $h_{\Ch}$-height.

We now proceed by contradiction:  if no such $\epsilon$ exists, then there will be a generic sequence of points in $(\cX')_V^{\ell+1}(\Qbar)$ with  $\sum_{i=1}^{\ell+1}\hcrit(f_i)$ tending to 0.  We apply \cite[Theorem 5.4.3]{Yuan:Zhang:quasiprojective} to deduce that the Galois orbits of these points will be equidistributed with respect the two bifurcation measures, implying the two measures coincide.

The contradiction will come from an analysis of the two measures. More precisely, using a slicing argument as in \cite{Mavraki:Schmidt} (which was revisited in \cite{DM:commonprep}), it suffices to analyze the family of bifurcation measures they induce on the fibers $X_{\la}$. Here we appeal to the maximal variation of the family and the dynamical theory of $J$-stability.  We find a parameter $\lambda_0 \in V$ and conjugacy class $[f_0] \in X_{\lambda_0}$ so that one collection of $r$ critical points is pre-repelling for $f_0$, while one of the critical points in the other $r$-tuple is in the basin of a parabolic point.  We then zoom into the parameter space at $f_0$, where the first $r$-tuple of preperiodic critical points yields an asymptotic similarity between the bifurcation measure and the dynamical space, exactly as in the proof of \cite[Main Theorem]{Buff:Epstein:PCF}.  But zooming in at this rate yields 0 for the bifurcation measure defined with the second $r$-tuple of critical points, because one of the critical points has infinite orbit and is attracted to a parabolic cycle.  Details are provided in Section \ref{proof}.  The proofs of Theorems \ref{main1}, \ref{main2}, \ref{main1Bogomolov}, and \ref{main2Bogomolov} are completed in Section \ref{main proofs}.

\subsection{Questions}
Finally, we remark that our initial (failed) attempts to prove Theorems \ref{relative} were more in line with previously known cases of DAO, combined with the strategy from \cite{Mavraki:Schmidt} to obtain uniform bounds, but we encountered several obstacles. We were led to the following questions, and we view each as a measure-theoretic strengthening of DAO.

\begin{question} \label{q1}
Suppose that $\Lambda$ is a complex manifold of dimension $r\ge 1$, parameterizing a holomorphic family of maps 
	$$f: \Lambda\times \bP^1\to \Lambda\times \bP^1$$ 
defined by $(\lambda, z) \mapsto (\lambda, f_\lambda(z))$, and suppose that $c_1, \ldots, c_{r+1}: \Lambda \to \bP^1$ are marked critical points of $f_\lambda$.  Let $T_i$ denote the bifurcation current on $\Lambda$ associated to the pair $(f, c_i)$, defined in \S\ref{bif}.  Suppose that  
$$T_1 \wedge \cdots \wedge T_{r-1} \wedge T_r \; =\; \alpha \, T_1 \wedge \cdots \wedge T_{r-1} \wedge T_{r+1} \not= 0$$ 
on $\Lambda$ for some $\alpha>0$.  Can we conclude that the pair $(c_r, c_{r+1})$ of critical points is dynamically related along $\Lambda$?  In other words, does there exist a proper, complex-analytic subvariety $X \subset \Lambda \times (\bP^1\times\bP^1)$ which projects surjectively to $\Lambda$, is invariant for the action of $(f,f)$, and contains the graph of $(c_r, c_{r+1})$ over $\Lambda$?  

The question has only been answered in a few special cases with $r=1$:  (1) for the 1-parameter families of polynomials treated by Baker-DeMarco in \cite{BD:polyPCF} parameterized by $\Lambda = \C$, and (2) for the curves $\Lambda = \Per_1(\eta)$ in $\M_2$, for each $\eta \in \C$, of Example \ref{Per1} \cite{DWY:QPer1}. 
\end{question}

\begin{question} \label{q2} 
Suppose that $\Lambda$ is a smooth and irreducible, quasiprojective, complex algebraic variety of dimension $r \geq 2$ and that $f: \Lambda \times \bP^1 \to \Lambda\times\bP^1$ is a morphism that defines an algebraic family of maps on $\bP^1$, with marked critical points.  With the notation of Question \ref{q1}, 
suppose that 
	$$T_1 \wedge \cdots \wedge T_r \not= 0 $$
on $\Lambda$ but 
	$$T_1 \wedge \cdots \wedge T_{r-1} \wedge T_{r+1} = 0$$
on $\Lambda$.  Can we conclude that there exists $i \in \{1, \ldots, r-1\}$ so that the pair $(c_i, c_{r+1})$ is dynamically related along $\Lambda$? (We remark here that one can define a dynamical relation for a tuple $(c_1,\ldots,c_{r+1})$ as for a pair; see \cite[\S 6.2]{D:stableheight}. But it follows by \cite[Proposition 2.21]{Medvedev:Scanlon} that dynamical relations always come in pairs.)  We know of no cases of this question that have been answered.
\end{question}

\begin{remark} 
Versions of Theorems \ref{main1} and \ref{main2} are expected to hold in more general settings, for example when marking points on $\bP^1$ are not necessarily critical, or upon working with preperiodic points in families of maps on $\bP^N$.  See \cite{DM:UDMM}.  Analogues of Questions \ref{q1} and \ref{q2} can also be formulated, but we keep our focus in this article on the postcritically-finite maps on $\bP^1$ and the bifurcation currents associated to marked critical points.
\end{remark}

\subsection{Acknowledgements}
We would like to thank Dori Bejleri, Xavier Buff, Ziyang Gao, Thomas Gauthier, Zhuchao Ji, Lars K\"uhne, Harry Schmidt, Gabriel Vigny, and Junyi Xie for helpful discussions.  We also thank the anonymous referees for their comments and suggestions. Our research was supported in part by the National Science Foundation and by the Natural Sciences and Engineering Research Council of Canada.  In addition, Laura acknowledges the Radcliffe Institute for Advanced Study at Harvard; Myrto acknowledges the University of Toronto and Fields Insitute; and Hexi was partially supported by National Key R\&D Program of China ($2021$YFA$1003202$) and NSFC ($12131016$, $12331004$).

\bigskip
\section{Background on $\M_d$ and its subvarieties}

In this section, we provide important background on the moduli spaces $\M_d$, and we look into properties of a maximally varying family $\cX \to V$ of subvarieties of $\M_d$.  

\subsection{The moduli space $\M_d$ and a critically-marked parameter space} \label{Md}
For each degree $d \geq 2$, the moduli space $\M_d$ is the space of conformal conjugacy classes of maps $f: \bP^1\to\bP^1$ defined over $\C$.  It is an affine algebraic variety of dimension $2d-2$.  Silverman showed that it exists as a geometric quotient over $\Spec \Z$, and he introduced a projective compactification $\Mbar_d$ via the tools of Geometric Invariant Theory which can be defined over $\Q$ \cite{Silverman:Ratd}.  We note that $\M_2 \iso \C^2$ \cite{Milnor:quad} while $\M_d$ is singular in all degrees $d>2$ \cite{West:moduli, Miasnikov:Stout:Williams}.  Because of the existence of maps with automorphisms, this $\M_d$ is not a fine moduli space, but we can pass to a finite branched cover $\cM_d^{cm} \to \M_d$ to build a family 
\begin{equation}\label{universal}
	f : \cM_d^{cm} \times \bP^1 \to \cM_d^{cm} \times \bP^1
\end{equation}
of maps, and we may choose $\cM_d^{cm}$ so that we also have functions 
	$$c_i:  \cM_d^{cm} \to \bP^1$$
marking each of the critical points, $i = 1, \ldots, 2d-2$.

\subsection{PCF maps and Thurston rigidity} \label{Thurston}
A map $f: \bP^1 \to \bP^1$ is {\bf postcritically finite} if each of its critical points has a finite forward orbit.  An important class of examples of such maps are the Latt\`es maps, which are quotients of a self-map of a complex torus; the {\bf flexible Latt\`es maps} are those that are part of a positive-dimensional family in $\M_d$; see \cite{Milnor:Lattes} for more information on Latt\`es maps. We will denote the flexible Latt\`es locus by $\cL_d$ in $\M_d$; it exists only in square degrees. From Thurston's rigidity theorem \cite{Douady:Hubbard:Thurston}, we may deduce that any postcritically finite map which is {\em not} flexible Latt\`es is M\"obius-conjugate to a map $f: \bP^1\to \bP^1$ defined over $\Qbar$.  

\subsection{The bifurcation current and its wedge powers} \label{bif}
Following \cite{Dujardin:Favre:critical}, we can define a {\bf bifurcation current} $T_i$ on $\cM_d^{cm}$ associated to the pair $(f,c_i)$ for the map $f$ of \eqref{universal} and each critical point $c_i$, as follows.  We let $\omega$ denote a Fubini-Study form on $\bP^1$ and let $\hat{\omega}$ be its pullback to $\cM_d^{cm} \times \bP^1$ by the projection map.  Then the sequence of pullbacks $\frac{1}{d^n} (f^n)^* \hat\omega$ converge weakly to a current $\hat{T}_f$ on $\cM_d^{cm} \times \bP^1$.  We let 
\begin{equation} \label{current i}
	T_i = \pi_* \left( \hat{T}_f \wedge [\Gamma_{c_i}] \right),
\end{equation}
where $\pi: \cM_d^{cm} \times \bP^1 \to \cM_d^{cm}$ is the projection and $[\Gamma_{c_i}]$ is the current of integration along the graph of $c_i$.  Then $T_i$ is a positive (1,1)-current with continuous potentials. The sum 
\begin{equation} \label{bif current def}
	\Tbif = \sum_{i = 1}^{2d-2} T_i
\end{equation}
descends to a well-defined positive $(1,1)$-current on the moduli space $\M_d$, and it agrees with the {\bf bifurcation current} introduced in \cite{D:current}.  Because of its continuous potentials, the wedge powers of $\Tbif$ are well defined.  The {\bf bifurcation measure} $\Tbif^{\wedge (2d-2)}$ was first introduced in \cite{Bassanelli:Berteloot} and is non-zero on $\M_d$.  

If $X \subset \cM_d^{cm}$ is an irreducible complex algebraic subvariety of dimension $r > 0$ whose image in $\M_d$ is not contained in the flexible Latt\`es locus $\cL_d$, then $\Tbif \not=0$ on $X$, as a consequence of \cite[Lemma 2.2]{McMullen:families} and \cite[Theorem 1.1]{D:current}.  Moreover, $T_i = 0$ on $X$ if and only if the critical point $c_i$ is persistently preperiodic along $X$ \cite[Theorem 2.5]{Dujardin:Favre:critical}.  It was proved in \cite[Lemma 6.8]{Gauthier:Okuyama:Vigny} that we also have $\Tbif^{\wedge r} \not=0$ on $X$; in other words, there always exists an $r$-tuple of critical points $c_{i_1}, \ldots, c_{i_r}$ so that $T_{i_1} \wedge \cdots \wedge T_{i_r} \not=0$ on $X$.

\subsection{The critical height}
For each $f: \bP^1 \to \bP^1$ of degree $d>1$ defined over $\Qbar$, recall that the canonical height on $\bP^1(\Qbar)$ is defined by 
	$$\hat{h}_f(x) = \lim_{n\to\infty} \frac{1}{d^n} h(f^n(x)),$$
where $h$ is a Weil height on $\bP^1$ \cite{Call:Silverman}.  Counting the critical points with multiplicity, the sum 
	$$\hcrit(f) := \sum_{\{c~: ~f'(c)=0\}} \hat{h}_f(c)$$
defines a function on $\cM_d^{cm}$, which Silverman called the {\bf critical height} and conjectured that it is comparable with a Weil height on $\M_d \setminus\cL_d$; see, for example, \cite{Silverman:moduli}.  This comparability was proved by Ingram in \cite{Ingram:criticalheight}.  

The construction of the critical height was revisited by Yuan and Zhang in \cite[\S 6.3]{Yuan:Zhang:quasiprojective}, where they show that $\hcrit$ is the height associated to a non-degenerate adelically metrized line bundle on $\cM^{cm}_d$ over $\Q$. Indeed, they construct an $f$-invariant adelic line bundle $\overline{L}_f$ on $\cM_d^{cm} \times\bP^1$ for the $f$ of \eqref{universal}. The critical height is then the height function associated to the `restriction' of $\overline{L}_f$ to the ramification divisor of $f$; see \cite[\S 6.3]{Yuan:Zhang:quasiprojective}. Important for us is that the curvature distribution at the archimedean place is equal to the bifurcation current $\Tbif$ defined in \S\ref{bif}. The non-degeneracy of this metrized line bundle follows because the bifurcation measure is non-trivial, as was proved in \cite{Bassanelli:Berteloot}.

Yuan and Zhang use their general equidistribution theorem \cite[Theorem 5.4.3]{Yuan:Zhang:quasiprojective} to give an alternative proof of the arithmetic equidistribution of the points in $\PCF$ to the bifurcation measure $\Tbif^{\wedge 2d-2}$ in \cite[\S 6.3]{Yuan:Zhang:quasiprojective}, a result which was first obtained by Gauthier in \cite{Gauthier:goodheights} (and was proved for the polynomial moduli spaces in \cite{Favre:Gauthier:distribution}).  

\subsection{The Chow variety and maximal variation}
\label{chow}  
For general information about Chow varieties, we refer the reader to \cite{GKZ} \cite{Kollar:rational}.  Here we fix notation and state some basic properties that we need.  We will apply the following discussion to $Y = \M_d$ or $Y = \cM_d^{cm}$.

Suppose that $Y$ is an irreducible quasiprojective algebraic variety, equipped with a projective embedding of $Y \hookrightarrow \bP^N$ defined over $\Q$. We let $\overline{Y}$ denote the closure of $Y$ in $\bP^N$.  For each integer $0 \leq r < \dim Y$, and with respect to this embedding, we first consider the Chow variety $\Ch(\overline{Y}, r, D)$ of algebraic cycles $X \subset \overline{Y}$ in dimension $r$ with degree $\leq D$.  It is an algebraic subvariety, possibly reducible, of the Chow variety $\Ch(\bP^N, r, D)$, consisting of the cycles with dimension $r$ and degree $\leq D$ supported in $\overline{Y}$.  As $\overline{Y}$ is defined over $\Q$, the Chow variety $\Ch(\overline{Y}, r, D)$ is also defined over $\Q$.  We let $\Ch(Y, r, D)$ denote the Zariski-open subset consisting of cycles with support intersecting $Y$.  

We will say that an irreducible subvariety $X \subset Y$ of dimension $r$ has {\bf degree $\leq D$} if its Zariski closure in $\overline{Y}$ is the support of a cycle represented by a point in $\Ch(Y, r, D)$.  Note that the degree depends on the choice of the embedding of $Y$ into $\bP^N$, but the concept of bounded degree does not.  

Suppose that $\pi: \mathcal{X}\to V$ is a family of $r$-dimensional varieties in $Y$, parametrized by $\lambda$ in an irreducible quasi-projective variety $V$ of dimension $\ell\ge 1$. Assume that the generic fiber of $\cX \to V$ is geometrically irreducible. We say that $\mathcal{X}\to V$ is {\bf maximally varying} if the natural map from $V$ to the Chow variety $\Ch(Y, r, D)$ has finite fibers, where we send each irreducible fiber $X_\lambda := \pi^{-1}(\lambda)$ to the point of $\Ch(Y, r, D)$ representing the cycle $[\overline{X}_\lambda]$ on $\overline{Y}$.

For any positive integer $m$, recall that $\cX_V^m$ denotes the $m$-th fiber power of $\cX$ over $V$. There is a natural map 
	$$\rho_m: \mathcal{X}_V^m\to Y^{m}$$ 
defined by $\rho_m(\la,f_1,\ldots,f_{m})=(f_1,\ldots,f_{m})$.

\begin{proposition} \label{max varying}
Suppose that $Y$ is an irreducible quasiprojective complex algebraic variety.  
Suppose that $\mathcal{X}\to V$ is a family of $r$-dimensional varieties $X_{\lambda}\subset Y$, parametrized by $\lambda$ in a quasi-projective variety $V$ of dimension $\ell\ge 1$.  Assume that $V$ and the generic fiber of $\cX \to V$ are irreducible.  If $\cX \to V$ is maximally varying, then the map 
	$$\rho_\ell: \cX^\ell_V \to Y^\ell$$
is generically finite.  In particular $\dim \rho_\ell(\cX^{\ell}_V) = \dim \cX_V^{\ell}$.
\end{proposition}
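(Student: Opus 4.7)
My plan is to argue by contradiction, introducing an auxiliary incidence variety and computing its dimension in two different ways. First, I would reduce to the case where every fiber of $\cX \to V$ is irreducible of dimension $r$, by replacing $V$ with the dense open subset where this holds (nonempty by the hypothesis on the generic fiber); the reduction preserves maximal variation, and generic finiteness of $\rho_\ell$ on the corresponding open subset of $\cX_V^\ell$ implies the same on all of $\cX_V^\ell$.

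Assuming for contradiction that $\rho_\ell$ is not generically finite, a generic fiber of $\rho_\ell$ has dimension at least one. I would then introduce
$$J = \{(\lambda, \lambda', f_1, \ldots, f_\ell) \in V \times V \times Y^\ell : \lambda \neq \lambda',\ f_i \in X_\lambda \cap X_{\lambda'} \text{ for all } i\}$$
and compute $\dim J$ via two different projections. Projecting $J$ to $\cX_V^\ell$ by $(\lambda, \lambda', f_1, \ldots, f_\ell) \mapsto (\lambda, f_1, \ldots, f_\ell)$, the fiber over a generic point of $\cX_V^\ell$ is the positive-dimensional fiber of $\rho_\ell$ with the single point $\lambda$ removed, yielding $\dim J \geq \ell(r+1)+1$. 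Projecting $J$ instead to $V \times V$, I would let $E := V \times_{\Ch(Y,r,D)} V = \{(\lambda, \lambda') : X_\lambda = X_{\lambda'}\}$; maximal variation forces $E \to V$ to be finite, so $\dim E = \ell$. For each irreducible component $J_i$ of $J$ with image closure $S_i \subset V \times V$, there are two cases: if $S_i \subset E$, the fiber of $J_i \to S_i$ over a generic point is contained in $X_\lambda^\ell$ of dimension $\ell r$, giving $\dim J_i \leq \ell + \ell r = \ell(r+1)$; if $S_i \not\subset E$, then at a generic point of $S_i$ the subvarieties $X_\lambda \neq X_{\lambda'}$ are distinct and irreducible of dimension $r$, forcing $\dim(X_\lambda \cap X_{\lambda'}) \leq r-1$ and hence $\dim J_i \leq 2\ell + \ell(r-1) = \ell(r+1)$. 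Either way $\dim J \leq \ell(r+1)$, contradicting the first estimate.

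The main obstacle is the upper bound on $\dim J$: it rests entirely on maximal variation through the equality $\dim E = \ell$. Without the finite-fibers hypothesis, $E$ could have dimension up to $2\ell$, and the Case A contribution would exceed $\ell(r+1)$, collapsing the argument; I would need to verify carefully that the two cases really cover all components of $J$ and that the fiber-dimension estimate $\dim(X_\lambda \cap X_{\lambda'}) \leq r-1$ holds generically after the reduction step. Once the contradiction is in place, the final assertion $\dim \rho_\ell(\cX_V^\ell) = \dim \cX_V^\ell$ is immediate from generic finiteness of $\rho_\ell$.
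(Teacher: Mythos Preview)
Your argument is correct and takes a genuinely different route from the paper. The paper proceeds by induction on $\ell$: the case $\ell=1$ is immediate, and for $\ell>1$ one slices $\rho_1(\cX)$ by a codimension-$(r+1)$ subvariety through a well-chosen smooth point, thereby cutting $V$ down to a codimension-one subvariety $V_1$ and reducing to the statement for the restricted family $\cX|_{V_1}\to V_1$ via the projections $p_1:\cX_V^\ell\to\cX$ and $p_2:\cX_V^\ell\to\cX_V^{\ell-1}$. Your incidence-variety argument avoids induction entirely and is both shorter and more transparent: the dichotomy $S_i\subset E$ versus $S_i\not\subset E$ cleanly isolates the role of maximal variation (via $\dim E=\ell$), and the two bounds $\ell+\ell r$ and $2\ell+\ell(r-1)$ coincide exactly at $\ell(r+1)$, which is what makes the contradiction sharp. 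The paper's inductive approach, by contrast, is more constructive---it produces explicit points at which $\rho_\ell$ is finite---which may be why the authors chose it given the later need (in Propositions~\ref{bi-bif} and~\ref{two measures}) to locate specific parameters with prescribed behavior. Your self-flagged worries are non-issues: the two cases are exhaustive by definition, and after your reduction to irreducible $r$-dimensional fibers, two distinct $X_\lambda$, $X_{\lambda'}$ are irreducible of the same dimension, so neither contains the other and $\dim(X_\lambda\cap X_{\lambda'})\le r-1$ holds at every point of $S_i\setminus E$, hence generically on $S_i$.
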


\begin{proof}
We prove the result by induction on $\ell$.  For $\ell = 1$, the result is clear, as $\dim \cX = r+1$ and maximal variation implies that the image of $\cX$ in $Y$ is not contained in an $r$-dimensional subvariety, so $\dim \rho_1(\cX) = \dim \cX = r+1$.

Now suppose $\ell > 1$ and that we know the result for maximally varying families with base dimension $< \ell$.   Note that $\dim \cX = \ell + r$ and $\dim \cX_V^\ell = \ell(r+1)$.  Choose any smooth point $x_0$ of $\rho_1(\cX)$ which is also smooth for a subvariety $X_{\lambda_0}$ it lies in, and which is regular for the maps $\rho_1: \cX \to \rho_1(\cX)$ and $\pi_1: \rho_\ell(\cX_V^\ell) \to \rho_1(\cX)$ where the latter is the restriction of the projection $Y^\ell \to Y$ to the first coordinate.  Here, `regular for the maps' means that the fiber over $x_0$ has the expected dimension and there is a point in the fiber over $x_0$ mapping surjectively on tangent spaces to that of $x_0$.

Let $Y_1 \subset \rho_1(\cX) \subset Y$ be a subvariety of codimension $r$ in $\rho_1(\cX)$ containing $x_0$, intersecting $X_{\lambda_0}$ (and therefore a general $X_\lambda$) in a finite set.   Consider the projection $p_1:  \cX_V^{\ell} \to \cX$ to the first factor and the projection $p_2: \cX_V^{\ell} \to \cX_V^{\ell-1}$ to the remaining $\ell-1$ factors.  Note that $\rho_1^{-1}(Y_1) \subset \cX$ projects dominantly to the base $V$, and $p_1^{-1}(\rho_1^{-1}(Y_1))$ projects dominantly and generically finitely by $p_2$ to $\cX_V^{\ell-1}$.

Now choose any $Y_1' \subset Y_1$ of codimension 1 containing $x_0$, so of codimension $r+1$ in $\rho_1(\cX)$.  Then $\rho_1^{-1}(Y_1')$ has codimension $r+1$ in $\cX$, and its projection to the base $V$ will be a subvariety $V_1$ of codimension 1.  Let $\cX|_{V_1} \to V_1$ denote the restricted family over the $(\ell-1)$-dimensional base.  Note that $p_1^{-1}(\rho_1^{-1}(Y_1'))$ is contained in $\cX_{V_1}^\ell$ and projects dominantly and generically finitely by $p_2$ to $\cX_{V_1}^{\ell-1}$.  

By induction we know that $\dim \rho_{\ell-1}(\cX_{V_1}^{\ell-1}) = \dim \cX_{V_1}^{\ell-1} = (\ell - 1)(r+1) = \ell(r+1) - (r+1)$.  Therefore 
	$$\dim \rho_\ell(p_1^{-1}(\rho_1^{-1}(Y_1'))) = \dim \cX_{V_1}^{\ell-1} = \ell(r+1)-(r+1).$$ 
Now consider $\pi_1^{-1}(Y_1')$ in $\rho_\ell(\cX_V^\ell)$, which also has codimension $r+1$ by the choice of $x_0$.   But $\pi_1 \circ \rho_\ell = \rho_1\circ p_1$, so that 
	$$\dim \pi_1^{-1}(Y_1') = \dim \rho_\ell(p_1^{-1}(\rho_1^{-1}(Y_1'))),$$
implying that 
	$$\dim \rho_\ell(\cX_V^{\ell}) = \ell(r+1)$$
and the proof is complete. 
\end{proof}

\bigskip
\section{Bifurcation measures on $\cX_V^\ell$ and $\cX_V^{\ell+1}$}\label{non-degeneracy}

Recall the definition of $\cM_d^{cm}$ from \S\ref{Md}.  
Throughout this section, we assume that $\mathcal{X}\to V$ is a maximally varying family of $r$-dimensional varieties $X_{\lambda}\subset \cM_d^{cm}$, parametrized by $\lambda$ in a quasi-projective variety $V$ of dimension $\ell\ge 1$.  Assume that $V$ and the generic fiber of $\cX \to V$ are irreducible.  For any positive integer $m$, recall from \S\ref{strategy} that $\cX_V^m$ denotes the $m$-th fiber power of $\cX$ over $V$, and we denote elements of $\cX_V^{m}$ by $(\lambda, f_1, \ldots, f_m)$, with $f_i \in X_\lambda$ for each $i$. There is a natural map 
\begin{equation} \label{rho}
	\rho_m: \mathcal{X}_V^m\to (\cM^{cm}_d)^m
\end{equation}
defined by $\rho_m(\la,f_1,\ldots,f_{m})=(f_1,\ldots,f_{m})$.  From Proposition \ref{max varying}, we know that $\rho_m$ is generically finite for all $m \geq \ell$. 

In this section we construct nontrivial measures on $\cX_V^{\ell}$ and $\cX_V^{\ell+1}$ that are pullbacks of powers of the bifurcation currents in $\cM_d^{cm}$, defined in \S\ref{bif}.  The measure on $\cX_V^{\ell}$ will be defined in terms of $\Tbif$ of \eqref{bif current def}, while two measures on $\cX_V^{\ell+1}$ will additionally involve choices of $r$-tuples of marked critical points.  The proofs build on the strong rigidity result that the power $\Tbif^{\wedge r}$ of the bifurcation current is nonzero on every $r$-dimensional subvariety of $\M_d\setminus \cL_d$, where $\cL_d$ is the flexible Latt\`es locus; this is proved in \cite[Lemma 6.8]{Gauthier:Okuyama:Vigny} for all $r$.  (See the discussion in \S\ref{bif}.)

We also appeal to descriptions of the support of powers of $\Tbif$ in $\M_d$ in terms of critical orbit behavior obtained by Buff-Epstein in \cite{Buff:Epstein:PCF} and Dujardin in \cite{Dujardin:higherbif}.  Precisely, suppose that $f_1, \ldots, f_m$ is a collection of holomorphic families of maps on $\bP^1$ parameterized by $\lambda$ in a complex manifold $\Lambda$, and let $\Phi := (f_1, \ldots, f_m): \Lambda \times (\bP^1)^m \to \Lambda \times (\bP^1)^m$ be the induced holomorphic map.  We say that an $m$-tuple of marked points ${\bf a} = (a_1, \ldots, a_m): \Lambda \to (\bP^1)^m$ is {\bf transversely} (respectively, {\bf properly}) {\bf prerepelling for $\Phi$ at $\lambda_0$ over $\Lambda$} if there exist repelling periodic points $p_i$ of $f_{\lambda_0}$ 
and integers $n_i \ge 1$ such that 
$f_{\lambda_0}^{n_i}(a_i(\lambda_0)) = p_i$ for each $i = 1, \ldots, m$, 
and the following holds: if 
$p_i: U \to \mathbb{P}^1$ denotes the holomorphic continuation of the repelling periodic point $p_i=p_i(\lambda_0)$ 
over some neighborhood $U \subset \Lambda$ of $\lambda_0$ where it remains repelling, 
then the graphs of the maps
\[
\lambda \longmapsto \big(f_{1,\lambda}^{n_1}(a_1(\lambda)), 
\ldots, f_{m,\lambda}^{n_m}(a_m(\lambda))\big)
\quad \text{and} \quad
\lambda \longmapsto (p_1(\lambda), \ldots, p_m(\lambda))
\]
intersect transversely (respectively, properly) in 
$U \times (\mathbb{P}^1)^m$ at $\lambda_0$.

For positive integers $m \leq 2d-2$, marked critical points ${\bf c} = (c_1, \ldots, c_m)$, and $f_1 = \cdots = f_m := f$, Dujardin proved that there is a dense set of parameters in $\supp T_1 \wedge \cdots \wedge T_m$ in $\cM_d^{cm}$ at which ${\bf c}$ is transversely prerepelling over $\cM_d^{cm}$ \cite[Theorem 1.1]{Dujardin:higherbif}; recall that $T_i$ was defined in \eqref{current i}.  And conversely, if the $m$-tuple of critical points is properly prerepelling for $(f,\ldots, f)$ at some parameter $\lambda_0$ over a parameter space of dimension $\geq m$, then $T_1 \wedge \cdots \wedge T_m$ must be nonzero in that parameter space with $\lambda_0$ in its support, as first observed in \cite{Buff:Epstein:PCF} (where it was proved in the case $m=2d-2$ and assuming transversely prerepelling) and revisited in \cite[Theorem 6.2]{Gauthier:strong}, \cite[Theorem 2.3]{Gauthier:abscont}, and \cite[Proposition 3.7]{Berteloot:Bianchi:Dupont} in more general contexts and without assuming transversality.  See \cite[Proposition 4.8]{DM:commonprep} for the version we use, which implies that the latter converse result remains true with any $m$-tuple of maps $(f_1,\ldots, f_m)$ of degree $d$ in place of $(f,\ldots,f)$.

\subsection{Measure on $\cX_V^\ell$}

\begin{proposition}\label{bi-bif}
Let $\mathcal{X}\to V$ be a maximally varying family of dimension $r\ge 1$ varieties in $\cM_d^{cm}$ with $\dim V = \ell \geq 1$, and let $\rho_{\ell}$ be the map of \eqref{rho} for $m = \ell$.  Set $\cY_\ell = \rho_\ell(\cX_V^\ell)$.  Let $\Tbif$ denote the bifurcation current in $\cM_d^{cm}$ and let $\pi_i: (\cM_d^{cm})^{\ell}\to \cM_d^{cm}$ be the projection to the $i$-th factor for $i=1,\ldots,\ell$. Then 
$$\pi_1^{*}(T_{\mathrm{bif}}^{\wedge r+1})\wedge \cdots \wedge \pi_{\ell}^{*}(T_{\mathrm{bif}}^{\wedge r+1})\wedge [\cY_\ell]\neq 0$$
in $(\cM_d^{cm})^\ell$, so that 
	$$\mu_\ell := \rho_\ell^* \left(\pi_1^{*}(T_{\mathrm{bif}}^{\wedge r+1})\wedge \cdots \wedge \pi_{\ell}^{*}(T_{\mathrm{bif}}^{\wedge r+1})\right)$$
defines a positive measure on $\cX_V^\ell$.  Moreover, there is a collection of marked critical points 
	$${\bf c} = (c_{1,1}, \ldots, c_{1, r+1}, \ldots, c_{\ell, 1}, \ldots, c_{\ell, r+1})$$
defined on $\cX_V^\ell$ which is transversely prerepelling for the family of product maps 
	$$\Phi := (f_1,\ldots, f_1, \ldots,f_\ell, \ldots, f_\ell)$$ 
at a parameter $x_0$ over $\cX_V^\ell$, where each $f_i$ is repeated $r+1$ times.
\end{proposition}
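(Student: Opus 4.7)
Plan. By the prerepelling-nonvanishing criterion \cite[Proposition 4.8]{DM:commonprep}, the existence of a (properly) prerepelling $\ell(r+1)$-tuple of marked critical points for $\Phi$ at some parameter $x_0 \in \cX_V^\ell$ implies the nonvanishing of $\bigwedge_{i=1}^\ell q_i^*(\Tbif^{\wedge(r+1)})$ on $\cX_V^\ell$, where $q_i := \rho_1 \circ p_i$; pushing this forward via the generically finite $\rho_\ell: \cX_V^\ell \to \cY_\ell$ (Proposition \ref{max varying}) then yields the desired nonvanishing of $\pi_1^*\Tbif^{\wedge(r+1)} \wedge \cdots \wedge \pi_\ell^*\Tbif^{\wedge(r+1)} \wedge [\cY_\ell]$ in $(\cM_d^{cm})^\ell$. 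I therefore aim to construct a transversely prerepelling configuration directly, which implies both claims at once.

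First, I would produce an $(r+1)$-tuple $I = (i_1,\ldots,i_{r+1})$ of critical indices with $T_I := T_{i_1} \wedge \cdots \wedge T_{i_{r+1}} \neq 0$ on $\rho_1(\cX) \subset \cM_d^{cm}$, together with a Zariski-dense set of transversely prerepelling parameters. Maximal variation of the family forces $\dim \rho_1(\cX) \geq r+1 \geq 2 > \dim \cL_d$, so $\rho_1(\cX)$ is not contained in the Lattès locus; applying \cite[Lemma 6.8]{Gauthier:Okuyama:Vigny} and positivity of wedge products of the $T_i$ in $\Tbif = \sum_i T_i$ yields such a tuple $I$. On a generic $(r+1)$-dimensional slice $Z \subset \rho_1(\cX)$ on which $T_I|_Z$ is a nonzero top-degree (hence measure) current, Dujardin's density theorem \cite[Theorem 1.1]{Dujardin:higherbif} (or its extension in \cite[Theorem 6.2]{Gauthier:strong}) supplies a Zariski-dense subset of parameters transversely prerepelling for $I$ over $Z$; sweeping such slices produces a Zariski-dense set $G \subset \rho_1(\cX)$ of transversely prerepelling parameters.

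The key combinatorial step is then to find $\ell$ distinct points $f_i^0$ in a common fiber $X_{\lambda_0}$ with each $f_i^0 \in \rho_1^{-1}(G)$, giving $x_0 := (\lambda_0, f_1^0, \ldots, f_\ell^0) \in \cX_V^\ell$. Since $\rho_1^{-1}(G)$ is Zariski-dense in $\cX$, it meets a generic fiber $X_{\lambda_0}$ in a Zariski-dense subset from which $\ell$ distinct points can be chosen, possibly allowing the tuple $I_i$ to depend on $i$. To verify that the resulting $\ell(r+1)$-tuple is transversely prerepelling for $\Phi$ over $\cX_V^\ell$ at $x_0$, one observes that the $(r+1)$ conditions from the $i$-th factor depend only on $f_i$, so their differentials lie in the subspace $q_i^*(T^*_{f_i^0} \rho_1(\cX)) \subset T^*_{x_0} \cX_V^\ell$ and are linearly independent there; Proposition \ref{max varying} (generic finiteness of $\rho_\ell$) implies that the $\ell$ pullback cotangent subspaces jointly span $T^*_{x_0} \cX_V^\ell$ at a generic $x_0$, and a genericity argument within the Zariski-dense sets $G_{I_i}$ yields the joint linear independence of the full $\ell(r+1)$-tuple of differentials.

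The main obstacle is this last joint-transversality verification. When $\dim \rho_1(\cX) > r+1$ (the non-generic case where $\rho_1$ has positive-dimensional fibers), the $\ell$ subspaces $q_i^*(T^*_{f_i^0} \rho_1(\cX))$ overlap nontrivially in $T^*_{x_0} \cX_V^\ell$, so the individually independent $(r+1)$-dimensional bundles of prerepelling differentials might in principle collapse into these overlaps. The argument must therefore lean on the Zariski-density of the transversely prerepelling locus and on the freedom to vary the prerepelling data (iterates, repelling periodic points, and potentially the tuples $I_i$) to avoid such collapse. Once a transversely prerepelling configuration at $x_0$ is secured, \cite[Proposition 4.8]{DM:commonprep} delivers the nonvanishing of $\bigwedge_i q_i^*(\Tbif^{\wedge(r+1)})$ near $x_0$, which pushes forward through the generically finite $\rho_\ell$ to the asserted nonvanishing of $\pi_1^*\Tbif^{\wedge(r+1)} \wedge \cdots \wedge \pi_\ell^*\Tbif^{\wedge(r+1)} \wedge [\cY_\ell]$ and simultaneously establishes the ``moreover'' clause.
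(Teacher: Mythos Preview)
Your overall strategy---find a transversely prerepelling $\ell(r+1)$-tuple for $\Phi$ and invoke \cite[Proposition~4.8]{DM:commonprep}---is the same as the paper's, but the construction you propose has a genuine gap at precisely the point you flag as the ``main obstacle,'' and the paper's argument is organized quite differently to avoid it.

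The problem is the joint transversality. Write local coordinates on $\cX_V^\ell$ near $x_0$ as $(\lambda, t_1,\ldots,t_\ell)$ with $\lambda\in V$ and $t_i$ in the fiber $X_{\lambda}$. The $(r+1)$ prerepelling differentials coming from the $i$-th factor live in the span of $d\lambda$ and $dt_i$; at most $r$ of them can be independent modulo $d\lambda$ (since $\dim X_\lambda = r$), so each block contributes one ``residual'' covector in $T^*_{\lambda_0}V$. Joint transversality therefore requires these $\ell$ residual covectors to be linearly independent in the $\ell$-dimensional $T^*_{\lambda_0}V$. Nothing in your construction forces this: you have chosen the $f_i^0$ independently inside a prerepelling locus that is a countable union of positive-codimension subvarieties, so there is no continuous genericity to appeal to, and the fact that the $q_i^*(T^*\cM_d^{cm})$ jointly span (from generic finiteness of $\rho_\ell$) says nothing about whether these \emph{particular} $(r+1)$-dimensional sub-bundles avoid collision in the shared $\lambda$-directions. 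Your suggestion to ``vary the prerepelling data'' does not close this; one would need to show that for some choice of iterates and repelling cycles the residual covectors are independent, and that is essentially the content of the proposition.

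The paper sidesteps this by an induction on $\ell$ that manufactures the independence of the residual covectors one at a time. They work at a \emph{single} point $y_0\in\rho_1(\cX)$ (so $x_0=(y_0,\ldots,y_0)$, not $\ell$ distinct fiber points) and choose, for the first factor, an $(r+1)$-tuple $(c_{1,1},\ldots,c_{1,r+1})$ with the extra property that $(c_{1,1},\ldots,c_{1,r})$ is already transversely prerepelling over the fiber $X_{\lambda_0}$. This forces the locus $Z_1=\{c_{1,j}\text{ preperiodic}\}$ to meet $X_{\lambda_0}$ only at $y_0$, so its preimage projects to a codimension-$1$ subvariety $V_1\subset V$; equivalently, the first residual covector is nonzero in $T^*_{\lambda_0}V$. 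They then restrict to the still maximally varying family $\cX|_{V_1}\to V_1$ and repeat: apply \cite[Lemma~6.8]{Gauthier:Okuyama:Vigny} and \cite[Theorem~1.1]{Dujardin:higherbif} to $\cY_1=\rho_1(\cX|_{V_1})$ to choose a second $(r+1)$-tuple, again with $r$ of them transverse over the fiber, cutting $V_1$ down to $V_2$ of codimension $1$. After $\ell$ steps the base is $0$-dimensional and the full $\ell(r+1)$-tuple is transversely prerepelling by construction. The induction is what guarantees that each new residual covector is independent of the previous ones---this is exactly the content your direct approach is missing.
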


If ${\bf c}$ satisfies the conclusion of Proposition \ref{bi-bif} for $\Phi$, we say that it {\bf witnesses} the nonvanishing of $\mu_\ell$.  

\begin{proof}
Let $\cY \subset \cM_d^{cm}$ be the image of $\rho_1$, and let $k$ be the dimension of $\cY$, so that 
	$$2 \leq r+1 \leq k  \leq r+ \ell = \dim \cX.$$  
Since the projection $\cM_d^{cm} \to \M_d$ has finite fibers, we know from \cite[Lemma 6.8]{Gauthier:Okuyama:Vigny} that $\Tbif^{\wedge k} \not=0$ on $\cY$. 

If $\ell=1$, then we are done, since $k = r+1$ and $\Tbif^{\wedge (r+1)} \wedge [\cY]$ in $\cM_d^{cm}$.

Assume $\ell > 1$.  Let
	$$\Phi = (f_1,\ldots, f_1, \ldots, f_\ell, \ldots, f_\ell)$$
denote the family of product maps on $(\bP^1)^{\ell(r+1)}$ parameterized by $\cY_\ell \subset (\cM_d^{cm})^\ell$, where each coordinate $f_i$ is repeated $r+1$ times.

Let $U\subset \cY$ be a Zariski-open set so that the fibers of the two maps $\rho_1: \cX \to \cY$ and $\pi_1: \cY_\ell \to \cY$ to the first factor of $(\cM_d^{cm})^\ell$ have the expected dimension over $U$, namely $r+\ell-k$ for fibers of $\rho_1$ and $\ell + r\ell - k$ for $\pi_1$, and the maps are surjective on tangent spaces at some point of each fiber over elements of $U$.  Note that, by the symmetry of $\cY_\ell$, the fibers will have the same properties for projections $\pi_i$ for each $i$.  From \cite[Theorem 1.1]{Dujardin:higherbif}, there exists a $k$-tuple of critical points $\{c_1, \ldots, c_k\}$ and a parameter $y_0 \in U$ in the support of $\Tbif^{\wedge k}$ at which these critical points are transversely prerepelling for the family of maps $f_1$ parameterized by $U$. We may assume that $y_0$ is a smooth point of $\cY$ and of the subvariety $X_{\lambda_0}$ for a smooth parameter $\lambda_0 \in V$.

Now choose a collection of $r+1$ critical points $c_{1,1}, \ldots, c_{1,r+1} \in \{c_1, \ldots, c_k\}$ so that $(c_{1,1}, \ldots, c_{1,r})$ is transversely prerepelling at $y_0$ over $X_{\lambda_0}$ and the full $(r+1)$-tuple is transversely prerepelling at $y_0$ over $U$. Let $Z_1 \subset U$ be an irreducible component of the codimension-$(r+1)$ subvariety containing $y_0$ where these critical points are persistently preperiodic.  Note that $y_0$ is an isolated point of the intersection of $Z_1$ with $X_{\lambda_0}$.  Moreover, we know that $\pi_1^{-1}(Z_1)$ has codimension $r+1$ in $\cY_\ell$.

Note that the preimage $\rho_\ell^{-1}(\pi_1^{-1}(Z_1))$ in $\cX_V^{\ell}$ projects to a subvariety $V_1$ of codimension 1 in the base $V$.  In fact it maps with finite fibers to the $(\ell-1)$-th fiber power of a subfamily 
	$$\cX|_{V_1} \to V_1$$ 
where $\dim V_1 = \ell -1$. Now let $\cY_1$ be the image of this subfamily $\cX|_{V_1}$ in $\cM_d^{cm}$.  We repeat the argument with $\cY_1$ in place of $\cY$.  By maximal variation of $\cX$, we know that 
	$$2 \leq r+1 \leq k_1 := \dim \cY_1 \leq r+ \ell -1$$
and $\Tbif^{\wedge k_1} \not=0$ on $\cY_1$.  We choose another collection of critical points $c_{2,1}, \ldots, c_{2,r+1} \in \{c_1, \ldots, c_k\}$ so that the first $r$ are transversely prerepelling at $y_0$ over $X_{\lambda_0}$ and the full $(r+1)$-tuple is transversely prerepelling at $y_0$ over $\cY_1$.  Note that we could take $c_{2,j} = c_{1,j}$ for $j = 1, \ldots, r$.  We let $Z_2 \subset \cY_1$ be the codimension-$(r+1)$ subvariety containing $y_0$ where these critical points are persistently preperiodic.  Then $\pi_2^{-1}(Z_2) \cap \pi_1^{-1}(Z_1)$ has codimension $2(r+1)$ in $\cY_\ell$.  

In this way, we inductively construct a parameter $y = (y_0, \ldots, y_0) \in \cY_\ell$ and an $\ell(r+1)$-tuple of marked critical points 
	$${\bf c} = (c_{1,1}, \ldots, c_{1, r+1}, \ldots, c_{\ell, 1}, \ldots, c_{\ell, r+1})$$
which is transversely prerepelling for the map $\Phi$ at $y$ over $\cY_\ell$.  In particular, in the language of \cite[\S4.3]{DM:commonprep}, the graph $\Gamma_{\bf c}$ of ${\bf c}$ in $\cY_\ell \times (\bP^1)^{\ell(r+1)}$ defines a rigid repeller for the map $\Phi$ at this parameter, and we conclude from \cite[Proposition 4.8]{DM:commonprep} that 
\begin{equation} \label{witness}
	\hat{T}_\Phi^{\wedge \ell(r+1)} \wedge [\Gamma_{\bf c}] \not=0,
\end{equation}
where 
	$$\hat{T}_\Phi = \sum_{i=1}^\ell \sum_{j = 1}^{r+1} p_{i,j}^* \hat{T}_{f_i}$$ 
is the dynamical Green current for $\Phi$ on $\Lambda \times (\bP^1)^{\ell(r+1)}$, for the projections $p_{i,j}: \Lambda \times (\bP^1)^{\ell(r+1)} \to \Lambda \times \bP^1$.  Unwinding the definitions, we conclude that 
	$$\pi_1^{*}(T_{f_1, c_{1,1}}\wedge\cdots \wedge T_{f_1, c_{1, r+1}}) \wedge \cdots \wedge \pi_{\ell}^{*}(T_{f_\ell, c_{\ell,1}} \wedge \cdots \wedge T_{f_\ell, c_{\ell, r+1}})\wedge [\cY_\ell]\neq 0$$ 
and therefore that 
	$$\pi_1^{*}(T_{\mathrm{bif}}^{\wedge r+1})\wedge \cdots \wedge \pi_{\ell}^{*}(T_{\mathrm{bif}}^{\wedge r+1})\wedge [\cY_\ell]\neq 0$$
so that the measure $\mu_\ell \not=0$.  Here we used the fact that $\dim \cX_V^{\ell}=\dim\cY_{\ell}$ from the maximal variation.

Finally, since the map from $\cX_V^{\ell}$ to $\cY_\ell$ is generically finite by Proposition \ref{max varying}, our initial choice of $y_0$ guarantees that the marked critical points ${\bf c}$ are also transversely prerepelling at a preimage of $y$, over $\cX_V^\ell$. 
\end{proof}

\subsection{Measures on $\cX_V^{\ell+1}$}
Let $\mu_\ell$ be the measure of Proposition \ref{bi-bif}, witnessed by the marked critical point ${\bf c}$.  We now use these critical points to construct nonvanishing measures on the next fiber power $\cX_V^{\ell+1}$.  Let 
	$$\Phi = (f_1, \ldots, f_1, \ldots, f_\ell, \ldots, f_\ell)$$
be as in Proposition \ref{bi-bif} parameterized by $\cX_V^\ell$ via the map $\rho_{\ell}$ of \eqref{rho}, and let 
	$$\Phi_{\ell+1} = (f_1, \ldots, f_1, \ldots, f_\ell, \ldots, f_\ell, f_{\ell+1}, \ldots, f_{\ell+1})$$
be the family of maps on $(\bP^1)^{\ell(r+1)+r}$ parameterized by $\cX_V^{\ell+1}$, where $f_1, \ldots, f_{\ell}$ are each repeated $r+1$ times, but the final coordinate $f_{\ell+1}$ is only repeated $r$ times.  Specializing at each $\lambda\in V$ we also have a family of maps 
$$F^{(\lambda)}:=(f,\ldots,f): X_{\lambda}\times (\bP^1)^r\to X_{\lambda}\times (\bP^1)^r,$$
parametrized by $X_{\lambda} \subset \cM_d^{cm}$, and, focusing on the $\ell$-th factor of $\cX_V^\ell$ and associated components of ${\bf c}$, we will use the marked critical points 
	$$c_{\ell,1}|_{X_\lambda}, \ldots, c_{\ell,r+1}|_{X_\lambda}: X_{\lambda}\to \bP^1$$ 
to define two measures $\mu_\ell^i$ and $\mu_\ell^j$ on $\cX_V^{\ell+1}$.  Conditions (1) and (2) of Proposition \ref{two measures} will allow us to conclude that $\mu_\ell^i$ and $\mu_\ell^j$ are nonzero.  Conditions (3) and (4) will allow us to prove in Section \ref{proof} that $\mu_\ell^i$ and $\mu_\ell^j$ are not proportional measures.

\begin{proposition}\label{two measures} 
Suppose the marked critical point 
	$${\bf c} = (c_{1,1}, \ldots, c_{1, r+1}, \ldots, c_{\ell, 1}, \ldots, c_{\ell, r+1})$$ 
witnesses $\mu_\ell \not=0$ on $\cX_V^\ell$.  Then there are parameters $x_0 = (\lambda_0, t_1, \ldots, t_\ell)$ and $x_1 = (\lambda_1, s_1, \ldots, s_\ell)$ in the support of $\mu_\ell$ in $\cX_V^\ell$ and indices $i \not= j$ in $\{1, \ldots, r+1\}$ so that  
\begin{enumerate}
\item the $r$-tuple $(c_{\ell,1}, \ldots, \widehat{c_{\ell, i}}, \ldots c_{\ell, r+1})$ is transversely prerepelling for $F^{(\lambda_0)}$ at $t_\ell$ over $X_{\lambda_0}$; 
\item the $r$-tuple $(c_{\ell,1}, \ldots, \widehat{c_{\ell, j}}, \ldots c_{\ell, r+1})$ is properly prerepelling for $F^{(\lambda_0)}$ at $t_\ell$ over $X_{\lambda_0}$;
\item the $r$-tuple $(c_{\ell,1}, \ldots, \widehat{c_{\ell, i}}, \ldots c_{\ell, r+1})$ is transversely prerepelling for $F^{(\lambda_1)}$ at $s_\ell$ over $X_{\lambda_1}$; and 
\item the critical point $c_{\ell, i}$ is attracted to a parabolic periodic point of $f$ at $s_\ell \in  X_{\lambda_1}$,
\end{enumerate}
where $\widehat{c_{\ell, i}}$ means that the $i$-th coordinate is omitted.  Let $\Pi_1:\mathcal{X}^{\ell}_V \times_V \mathcal{X}\to \cX_V^\ell$ and $\Pi_2:\mathcal{X}_V^{\ell}\times_V\mathcal{X}\to\mathcal{X}$. Then
$$\mu_\ell^i := \Pi_1^{*}(\mu_{\ell}) \wedge \Pi_2^*(T_{c_{\ell,1}}\wedge \cdots  \widehat{T_{c_{\ell, i}}}  \cdots \wedge T_{c_{\ell,r+1}}) \neq 0$$
and
	$$\mu_\ell^j := \Pi_1^{*}(\mu_{\ell}) \wedge \Pi_2^*(T_{c_{\ell,1}}\wedge \cdots  \widehat{T_{c_{\ell, j}}}  \cdots \wedge T_{c_{\ell,r+1}}) \neq 0$$
on $\cX_V^{\ell+1}$. 
\end{proposition}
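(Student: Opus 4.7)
The parameter $x_0 \in \cX_V^\ell$ together with indices $i \neq j$ in $\{1, \ldots, r+1\}$ satisfying (1) and (2) come directly from the inductive construction of Proposition~\ref{bi-bif}. There, the $(r+1)$-tuple $(c_{\ell, 1}, \ldots, c_{\ell, r+1})$ was chosen so that $(c_{\ell, 1}, \ldots, c_{\ell, r})$ is transversely prerepelling at $t_\ell$ over $X_{\lambda_0}$, while the full $(r+1)$-tuple is transversely prerepelling at $t_\ell$ over an $(r+1)$-dimensional parameter slice $W \supseteq X_{\lambda_0}$. Setting $i = r+1$ gives property (1) at $t_\ell$ over $X_{\lambda_0}$ by construction. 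For any $j \in \{1, \ldots, r\}$, a dimension count in $W \times (\bP^1)^r$ shows that the $r$-tuple obtained by deleting $c_{\ell, j}$ cuts out a smooth curve in $W$ through $t_\ell$, and intersecting this curve with $X_{\lambda_0}$ (codimension $1$ in $W$) yields an isolated intersection at $t_\ell$, giving property (2). We take $j = 1$.

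The non-vanishing of $\mu_\ell^i$ and $\mu_\ell^j$ on $\cX_V^{\ell+1} = \cX_V^\ell \times_V \cX$ follows from $x_0$ alone. Indeed, property (1) applied in $X_{\lambda_0}$ together with \cite[Proposition~4.8]{DM:commonprep} yields
\[
T_{c_{\ell, 1}} \wedge \cdots \widehat{T_{c_{\ell, i}}} \cdots \wedge T_{c_{\ell, r+1}} \;\neq\; 0 \quad \text{on } X_{\lambda_0},
\]
and property (2) analogously gives the same statement with $i$ replaced by $j$ (proper prerepelling is precisely the hypothesis in the cited result). Since $\mu_\ell \neq 0$ at $x_0$, a slicing argument in the fiber product---writing $\mu_\ell^i$ locally as $\mu_\ell$ wedged with the fiberwise measure on $X_{\lambda_0}$---then produces $\mu_\ell^i \neq 0$ and $\mu_\ell^j \neq 0$.

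The parameter $x_1$ requires locating $(\lambda_1, s_\ell) \in \cX$ at which the $r$-tuple $(c_{\ell, 1}, \ldots, c_{\ell, r})$ is transversely prerepelling over $X_{\lambda_1}$ and $c_{\ell, r+1}$ lies in a parabolic basin of $f_\ell$; the remaining coordinates $(s_1, \ldots, s_{\ell-1})$ can be chosen freely in the appropriate bifurcation supports so that $x_1 \in \supp \mu_\ell$. The plan is to observe that the $r$ transverse prerepelling conditions on $(c_{\ell, 1}, \ldots, c_{\ell, r})$ cut out an $\ell$-dimensional analytic subvariety $M \subset \cX$ projecting dominantly onto $V$. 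Maximal variation of $\cX \to V$ will imply that $c_{\ell, r+1}$ is not persistently preperiodic along $M$, so $M$ meets the support of $T_{c_{\ell, r+1}}$ nontrivially. Classical density of parabolic parameters in $\supp T_{c_{\ell, r+1}}$ (see, e.g., \cite{Buff:Epstein:PCF, Dujardin:Favre:critical}) then guarantees a point $(\lambda_1, s_\ell) \in M$ at which $c_{\ell, r+1}$ is attracted to a parabolic cycle.

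\textbf{Main obstacle.} The last step is the most delicate: in a single $r$-dimensional fiber $X_{\lambda_1}$ the combined Misiurewicz-plus-parabolic condition is empty on dimensional grounds, so one must crucially exploit the $\ell$-dimensional freedom of the base $V$, tracking the Misiurewicz point as it continues analytically in $\lambda$ along $M$ until it meets a parabolic parameter for $c_{\ell, r+1}$. Verifying non-persistence of $c_{\ell, r+1}$ along $M$ will require both the maximal variation hypothesis and $J$-stability arguments to exclude persistent critical orbit relations along $M$.
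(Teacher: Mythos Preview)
Your argument for property (2) contains a genuine gap. You assert that the smooth curve in $W$ cut out by the $r$ prerepelling conditions $\{c_{\ell,k}\}_{k\neq j}$ intersects $X_{\lambda_0}$ in an isolated point at $t_\ell$, but transversality of the full $(r+1)$-tuple in $W$ does not force this. The obstruction is that $S_{\ell,r+1}$ (the hypersurface where $c_{\ell,r+1}$ is prerepelling) may fail to meet $X_{\lambda_0}$ properly: $c_{\ell,r+1}$ could be persistently preperiodic along the curve $C_{\lambda_0}:=S_{\ell,2}\cap\cdots\cap S_{\ell,r}\cap X_{\lambda_0}$ inside $X_{\lambda_0}$, in which case the locus where $(c_{\ell,2},\ldots,c_{\ell,r+1})$ is prerepelling over $X_{\lambda_0}$ is the entire curve $C_{\lambda_0}$, not an isolated point. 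Nothing in the setup of Proposition~\ref{bi-bif} rules this out at the originally constructed $x_0$.

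The paper confronts exactly this possibility and resolves it by \emph{moving} $x_0$. One restricts to the one-parameter subfamily $\cX_1\to V_1$ obtained by freezing the first $(\ell-1)(r+1)$ critical points, considers the curve $\Gamma=S_{\ell,1}\cap\cdots\cap S_{\ell,r}$ in $\cX_1$ (a local section over $V_1$), and uses that $c_{\ell,r+1}$ is active along $\Gamma$ with $x_0$ in its bifurcation locus. If the proper-intersection condition failed at every nearby prerepelling parameter for $c_{\ell,r+1}$ on $\Gamma$, then $T_{c_{\ell,r+1}}$ would vanish on an uncountable family of curves $C_\lambda$, forcing $c_{\ell,r+1}$ to be persistently preperiodic on all of $\cC$ and contradicting the original transversality. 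So some nearby $x_0'$ works. This same curve $\Gamma$ is then used to find $x_1$: baby Mandelbrot sets are dense in the bifurcation locus of $c_{\ell,r+1}$ along $\Gamma$, producing a nearby parabolic parameter, and proximity to $x_0$ (where transversality is open) keeps the first $(\ell-1)(r+1)+r$ critical points transversely prerepelling, hence $x_1\in\supp\mu_\ell$. Your plan for (3) is in the right spirit, but the claim that ``maximal variation implies $c_{\ell,r+1}$ is not persistently preperiodic along $M$'' is not justified, and choosing $(s_1,\ldots,s_{\ell-1})$ ``freely'' does not by itself place $x_1$ in $\supp\mu_\ell$; working locally near $x_0$ along $\Gamma$ handles both issues at once.
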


\begin{proof}
Via the map $\rho_\ell: \cX_V^{\ell} \to (\cM_d^{cm})^{\ell}$, we view $\cX_V^{\ell}$ as the parameter space for a family of product maps $\Phi = (f_1, \ldots, f_1, \ldots, f_\ell, \ldots, f_\ell)$ on $(\bP^1)^{\ell(r+1)}$. From Proposition \ref{bi-bif}, we know that ${\bf c}$ is transversely prerepelling for $\Phi$ at a parameter $x_0 = (\lambda_0, t_1, \ldots, t_\ell) \in \supp \mu_\ell$.

Let $S_{i,j}$ be the irreducible component of the hypersurface in $\cX_V^\ell$ containing $x_0$ along which $c_{i,j}$ is persistently preperiodic for $f_i$, $i = 1, \ldots, \ell$, $j = 1, \ldots, r+1$.  Consider the component of the intersection 
	$$S_{1,1} \cap \cdots \cap S_{\ell-1, r+1}$$
containing $x_0$ in $\cX_V^\ell$.  Similar to the proof of Proposition \ref{max varying}, for dimension reasons, it maps (generically finitely) to a family $\cX_1 \to V_1$ of $r$-dimensional varieties over a 1-dimensional base, where these marked critical points are persistently preperiodic for $f_1, \ldots, f_{\ell-1}$.  We will continue to write $x_0 = (\lambda_0, t_\ell)$ for the point we started with, but now viewed in $\cX_1$.  

By the transversality, we know that $r$ of the remaining critical points, say $c_{\ell,1}, \ldots, c_{\ell, r}$, are transversely prerepelling at $t_\ell$ for $F^{(\lambda_0)} = (f, f, \ldots, f)$ over $X_{\lambda_0}$.

We now aim to show that by modifying the parameter $x_0$ if necessary, we can arrange so that another $r$-tuple, say $c_{\ell, 2}, \ldots, c_{\ell, r+1}$, is also properly prerepelling at $t_\ell$ for $F^{(\lambda_0)}$ over $X_{\lambda_0}$. Note that transversality of the intersection $S_{1,1} \cap \cdots \cap S_{\ell-1, r+1}$ at the original $x_0$ persists in a neighborhood, so the first $(\ell-1)(r+1)$ critical points remain transversely prerepelling in a neighborhood of $x_0$ for $(f_1, \ldots, f_{\ell-1})$.  

Consider the intersection 
	$$S_{\ell,1} \cap \cdots \cap S_{\ell, r}$$
in $\cX_1$.  These hypersurfaces intersect transversely at $x_0$ and so define a smooth holomorphic curve $\Gamma$ containing $x_0$ which is locally a section of $\cX_1 \to V_1$ near $\lambda_0$.  The intersection of $\Gamma$ with $X_{\lambda_0}$ is $x_0 = (\lambda_0, t_\ell)$.  Note further that $c_{\ell, r+1}$ is not persistently preperiodic along $\Gamma$ (because of the original setup of transverse intersections), and so $c_{\ell, r+1}$ must be active along $\Gamma$ with $x_0$ in its bifurcation locus (because it lands at a repelling cycle there). The bifurcation locus for $c_{\ell,r+1}$ in $\Gamma$ has no isolated points, so there must be an infinite collection of parameters $x \in \Gamma$ where $c_{\ell, r+1}$ is preperiodic to a repelling cycle for $f_\ell$, dense in the bifurcation locus of $c_{\ell, r+1}$ in a neighborhood of $x_0$.  

Now omit $S_{\ell, 1}$ from the intersection, so that $S_{\ell,2} \cap \cdots \cap S_{\ell, r}$ (in $\mathcal{X}_1$) defines a 1-parameter family of algebraic curves $\cC$, for all $\lambda$ in a small neighborhood of $\lambda_0\in V_1$, by
 	$$C_\lambda = (S_{\ell,2} \cap \cdots \cap S_{\ell, r}) \cap X_\lambda.$$
We know that $S_{\ell, r+1}$ intersects the total space of this family $\cC$ transversely at the given point $x_0$.  Suppose that it fails to intersect properly when restricted to $X_{\lambda_0}$; this would mean that $c_{\ell, r+1}$ is persistently preperiodic along the curve $C_{\lambda_0}$ in $X_{\lambda_0}$. Suppose this happens at all parameters $x \in \Gamma$ where $c_{\ell, r+1}$ is preperiodic to a repelling cycle.  This implies that the associated bifurcation current $T_{c_{\ell, r+1}}$ vanishes identically on each of these curves $C_x$ passing through each such $x$ in the family $\cC$.  By continuity of potentials of the bifurcation currents, we deduce that $T_{c_{\ell, r+1}}$ vanishes along $C_x$ for all $x$ in $\supp T_{c_{\ell, r+1}}|_\Gamma$ near $x_0$.  But this means that $c_{\ell, r+1}$ is persistently preperiodic along each of these curves \cite[Theorem 2.5]{Dujardin:Favre:critical}.  As they form an uncountable family, we see that $c_{\ell, r+1}$ must be preperiodic along the entire family $\cC$, which is a contradiction. 

We conclude that we can adjust our parameter $\lambda_0$ slightly, if needed, so that points $c_{\ell, 1}, \ldots, c_{\ell, r}$ are transversely prerepelling at a new choice of $x_0 \in \Gamma$ along $X_{\lambda_0}$ while $c_{\ell, 2}, \ldots, c_{\ell, r+1}$ are properly prerepelling at $x_0$ along $X_{\lambda_0}$.  This completes the proofs of (1) and (2) of the proposition.

As $c_{\ell,r+1}$  is bifurcating along the curve we called $\Gamma$,  the characterizations of $J$-stability (as in \cite{Mane:Sad:Sullivan}) imply that there are parameters, everywhere dense in its bifurcation locus, for which the maps have a parabolic cycle; the theory of renormalization (see \cite{Douady:Hubbard:polynomial-like}) implies that $c_{\ell, r+1}$ will lie in the basin of the parabolic cycle.  We let $\lambda_1$ be a parameter close to $\lambda_0$ in the support of $\pi_* \mu_\ell$, where $\pi$ is the projection to $V$, so that $c_{\ell,r+1}$ is attracted to a parabolic cycle at $x_1 := \Gamma \cap X_{\lambda_1}$, while $c_{\ell,1}, \ldots, c_{\ell,r}$ remain transversely prerepelling at $x_1$ along $X_{\lambda_1}$.  This completes the proof of properties (1)--(4).

To prove the non-vanishing of $\mu_\ell^i$ and $\mu_\ell^j$, consider the map $\Phi_{\ell+1} = f_1^{\times (r+1)} \times \cdots \times f_{\ell}^{\times (r+1)}\times f_{\ell+1}^{\times r}$ on $(\bP^1)^{\ell(r+1) + r}$ over $\cX_V^{\ell+1}$.  The argument above, leading to properties (1) and (2), shows that we can label critical points of $f_1, \ldots, f_{\ell+1}$ so that the graphs of the marked points
	$${\bf c}_i = (c_{1,1}, \ldots, c_{1, r+1}, \ldots, c_{\ell, 1}, \ldots, c_{\ell, r+1}, c_{\ell+1,1}, \ldots \widehat{c_{\ell+1, i}} \ldots, c_{\ell+1, r+1})$$
and 
	$${\bf c}_j = (c_{1,1}, \ldots, c_{1, r+1}, \ldots, c_{\ell, 1}, \ldots, c_{\ell, r+1}, c_{\ell+1,1}, \ldots \widehat{c_{\ell+1, j}} \ldots, c_{\ell+1, r+1})$$
each define a rigid repeller for the map $\Phi_{\ell+1}$ over a neighborhood of a parameter $(\lambda_0, t_1, \ldots, t_\ell, t_\ell)$ in $\cX_V^{\ell+1}$, in the sense of \cite[\S4.3]{DM:commonprep}, and so the measures are nonzero by \cite[Proposition 4.8]{DM:commonprep}.  
\end{proof}

\bigskip
\section{Proof of Theorem \ref{relative}} \label{proof}

Suppose that $\mathcal{X}\to V$ is a maximally varying family of $r$-dimensional varieties $X_{\lambda}\subset M_d$ parametrized by $V$ of dimension $\ell\ge 1$, defined over $\Qbar$. 
As in \S\ref{Md}, we may pass to a finite branched cover $\cM_d^{cm} \to \M_d$ over which we can define a family of maps $f:\bP^1 \to \bP^1$ with $2d-2$ marked critical points.  Pulling back each $X_\lambda$ and possibly replacing $V$ with a finite branched cover, we will continue to use the notation $\cX \to V$ for the family of varieties in $\cM_d^{cm}$.  Therefore, we may assume that we can define the family of product maps 
	$$\Phi= (f_1,\ldots, f_1, \ldots, f_\ell, \ldots, f_\ell),$$
on $(\bP^1)^{(r+1)\ell}$, parametrized by $\mathcal{X}^{\ell}_V$ and defined over $\Qbar$, where each coordinate $f_i$ is repeated $r+1$ times and a family 
$$\Phi_{(\ell+1)}=(f_{\ell+1},\ldots,f_{\ell+1})$$ 
on $(\mathbb{P}^1)^{r}$, parametrized by $\mathcal{X}$, where each coordinate is repeated $r$ times. 
We denote by $\overline{L}_{\Phi}$  and $\overline{L}_{\Phi_{(\ell+1)}}$ the $\Phi$ (respectively $\Phi_{(\ell+1)}$)-invariant extension of the classical polarization as defined by  \cite[Theorem 6.1.1]{Yuan:Zhang:quasiprojective}. In particular, using the notation in \cite{Yuan:Zhang:quasiprojective}, we have that $\overline{L}_{\Phi_{(\ell+1)}}\in \widehat{\mathrm{Pic}}(\mathcal{X}\times (\mathbb{P}^1)^r)_{\bQ,\mathrm{nef}}$ and $\overline{L}_{\Phi}\in \widehat{\mathrm{Pic}}(\mathcal{X}^{\ell}_V\times (\mathbb{P}^1)^{\dim \mathcal{X}^{\ell}_V})_{\bQ,\mathrm{nef}}$. 

In what follows it is convenient to work with the Deligne pairing (of relative dimension dimension $0$), of the invariant metrized line bundles restricted to graphs of critical marked points (see \cite[\S 4.2]{Yuan:Zhang:quasiprojective} for metrics of the Deligne pairing and \cite[\S 6.2.2]{Yuan:Zhang:quasiprojective} for the `restriction' of the invariant metrized line bundle to a subvariety).  By \cite[Theorem 4.2.3]{Yuan:Zhang:quasiprojective}, to each marked tuple of critical points ${\bf c}: \mathcal{X}^{\ell}_V\to (\mathbb{P}^1)^{\dim \mathcal{X}^{\ell}_V}$ we can associate a metrized line bundle $\< \overline{L}_{\Phi}|_{\Gamma_{\bf c}}\>\in \widehat{\mathrm{Pic}}( \mathcal{X}^{\ell}_V)_{\bQ,\mathrm{nef}}$, where $\Gamma_{\bf c}$ denotes the graph of ${\bf c}$ in $\mathcal{X}^{\ell}_V\times (\mathbb{P}^1)^{\dim \mathcal{X}^{\ell}_V}$. Moreover, we let 
$$\overline{L}:=\otimes _{{\bf c}} \<\overline{L}_{\Phi}|_{\Gamma_{\bf c}}\>\in \widehat{\mathrm{Pic}}( \mathcal{X}^{\ell}_V)_{\bQ,\mathrm{nef}}$$ be the tensor product over all marked tuples of critical points ${\bf c}$ for the maps $f_1, \ldots, f_\ell$.

Now let $\mathbf{c}=(c_{1,1},\ldots,c_{\ell,r+1}):\mathcal{X}^{\ell}_{V}\to (\bP^1)^{(r+1)\ell}$ be a marked critical point for $\Phi$ that witnesses $\mu_{\ell} \not=0$, as in Proposition \ref{bi-bif}.
Reordering the critical points if needed, we can find $x_0,x_1\in \supp\mu_{\ell}$ as in Proposition \ref{two measures}, with $i=r+1$ and $j=1$.
We write 
$c_i:=c_{\ell,i}:\mathcal{X}\to \mathbb{P}^1$ in what follows. 
As before, using \cite[Theorem 4.2.3]{Yuan:Zhang:quasiprojective}, we define two adelic metrized line bundles on $\mathcal{X}$ by 
\begin{align}\label{bif1}
\begin{split}
\overline{L}_1&:=\<\overline{L}_{\Phi_{(\ell+1)}}|_{\Gamma_{(c_1,\ldots,c_r)}}\>\in \widehat{\mathrm{Pic}}( \mathcal{X})_{\bQ,\mathrm{nef}}\\
\overline{L}_2&:=\<\overline{L}_{\Phi_{(\ell+1)}}|_{\Gamma_{(c_2,\ldots,c_{r+1})}}\>\in \widehat{\mathrm{Pic}}( \mathcal{X})_{\bQ,\mathrm{nef}},
\end{split}
\end{align}
and note that their curvature forms (at an archimedean place) are the bifurcation currents
\begin{align}\label{bif2}
\begin{split}
c_1(\overline{L}_1)&=T_{f_{\ell+1},c_1}+\cdots+T_{f_{\ell+1},c_r}\\
c_1(\overline{L}_2)&=T_{f_{\ell+1},c_2}+\cdots+T_{f_{\ell+1}, c_{r+1}}.
\end{split}
\end{align}
We can now define two metrized line bundles 
$$\overline{M}_i=\Pi_1^{*}(\overline{L})+\Pi_2^{*}(\overline{L}_{i})\in \widehat{\mathrm{Pic}}( \mathcal{X}^{\ell+1}_V)_{\bQ,\mathrm{nef}}$$ on $\mathcal{X}^{\ell+1}_V$ for $i=1,2$. By \cite[Lemma 6.2.1, Theorem 4.1.3]{Yuan:Zhang:quasiprojective}, the heights associated to $\overline{M}_i$ vanish at PCF points; see also \cite[Lemma 6.3.2]{Yuan:Zhang:quasiprojective}. More precisely, for a point $(\lambda_0,t_1,\ldots,t_{\ell+1})\in \mathcal{X}^{\ell+1}_V(\Qbar)$ we have 
\begin{align}
\begin{split}
h_{\overline{M}_1}(\lambda_0,t_1,\ldots,t_{\ell+1})&= \sum_{i=1}^{\ell}h_{\mathrm{crit}}(f_{i}) + \sum_{j=1}^{r}\hat{h}_{f_{\lambda_0,t_{\ell+1}}}(c_{j}(\lambda_0,t_{\ell+1}))\\
h_{\overline{M}_2}(\lambda_0,t_1,\ldots,t_{\ell+1})&= \sum_{i=1}^{\ell}h_{\mathrm{crit}}(f_{i}) + \sum_{j=2}^{r+1}\hat{h}_{f_{\lambda_0,t_{\ell+1}}}(c_{j}(\lambda_0,t_{\ell+1})).
\end{split}
\end{align}
We will now show that $\overline{M}_i$ is ``non-degenerate'' for $i=1,2$ in the terminology of \cite[\S 6.2.2]{Yuan:Zhang:quasiprojective}. Equivalently, by  \cite[Lemma 5.4]{Yuan:Zhang:quasiprojective}, it suffices to show that 
$$c_1(\overline{M_i})^{\wedge \ell(r+1)+r}\neq 0.$$
Recall that $T_{f_i,c_j}^{\wedge 2}=0$ for all $i,j$ and similarly $(\Pi^*_2c_1(\overline{L}_i))^{\wedge r+1}= (\Pi_2)^*(c_1(\overline{L}_i)^{\wedge r+1})=0$ for $i=1,2$, by the definition of the bifurcation current as limits of pullbacks, because the map $\Phi_{(\ell+1)}$ acts on the $r$-dimensional $(\mathbb{P}^1)^{r}$. Moreover, since $\dim\mathcal{X}^{\ell}_V= \ell(r+1)$ we also have that $\Pi_1^{*}(c_1(\overline{L}))^{\wedge \ell(r+1)+1}=0$.
Thus , using also  \cite[Theorem 4.2.3]{Yuan:Zhang:quasiprojective} as before, we have 
\begin{align}
c_1(\<\overline{L}\>)^{\wedge (r+1)\ell}=\al_1\,\mu_{\ell}
\end{align}
on $\cX_V^\ell$, and 
\begin{align}
\begin{split}
c_1(\overline{M_1})^{\wedge \ell(r+1)+r}=\al_2\, \Pi_1^{*}(\mu_{\ell})\wedge \Pi_2^{*}(T_{f_{\ell+1},c_1}\wedge\cdots\wedge T_{f_{\ell+1},c_r})=\al_2\,\mu_{\ell}^{r+1}\neq 0,\\
c_1(\overline{M_2})^{\wedge \ell(r+1)+r}=\al_3\,\Pi_1^{*}(\mu_{\ell})\wedge \Pi_2^{*}(T_{f_{\ell+1},c_2}\wedge\cdots\wedge T_{f_{\ell+1},c_{r+1}})=\al_3\,\mu^1_{\ell}\neq 0,
\end{split}
\end{align}
for positive constants $\al_1,\al_2,\al_3>0$. Their non-vanishing follows by Proposition \ref{two measures} (and our choice of critical points). 
Therefore, by the height inequality \cite[Theorem 1.3.2]{Yuan:Zhang:quasiprojective} we infer that there are positive constants $\epsilon_1=\epsilon_1(\mathcal{X})>0,~\epsilon_2=\epsilon_2(\mathcal{X})>0$
such that 
\begin{align}\label{big height gap}
\sum_{i=1}^{\ell+1}\hat{h}_{\mathrm{crit}}(f_i)\ge \epsilon_1 h_{\Ch}(\lambda)-\epsilon_2,
\end{align}
for all $(\lambda,f_1,\ldots,f_{\ell+1})\in U(\Qbar)$ for a Zariski open and dense subset $U\subset \mathcal{X}^{\ell+1}_V$. 

It remains to show that for some $\epsilon=\epsilon(\mathcal{X})$, the set $$\left\{(\lambda,f_1,\ldots,f_{\ell+1})\in \mathcal{X}_V^{\ell+1}(\Qbar)~:~ \sum_{i=1}^{\ell+1}\hcrit(f_i)<\epsilon\right\} $$
is not Zariski dense in $\cX_V^{\ell+1}$. 
We prove this result by contradiction. Suppose that there is a generic Galois invariant sequence $\{x_n\}_n$ of critically small points in $\cX_V^{\ell+1}$, so that $h_{\overline{M}_i}(x_n)\to 0$ for $i=1,2$. Since $\overline{M}_i$ is nef and non-degenerate, Yuan-Zhang's fundamental inequality \cite[Theorem 5.3.3]{Yuan:Zhang:quasiprojective} implies that $h_{\overline{M}_i}(\cX_V^{\ell+1})=0$ for $i=1,2$.  We can use \cite[Theorem 5.4.3]{Yuan:Zhang:quasiprojective} with the Galois-invariant critically small set to get that 
$$\Pi_1^{*}(\mu_{\ell})\wedge \Pi_2^{*}(T_{f_{\ell+1},c_1}\wedge\cdots\wedge T_{f_{\ell+1},c_r})= \alpha \, \Pi_1^{*}(\mu_{\ell})\wedge \Pi_2^{*}(T_{f_{\ell+1},c_2}\wedge\cdots\wedge T_{f_{\ell+1},c_{r+1}})\neq 0,$$
on $\cX_V^{\ell+1}$ for some constant $\alpha>0$.

We now slice these measures with respect to the projection $\cX_V^{\ell +1} \to \cX_V^\ell$; see, for example \cite[Proposition 4.3]{Bassanelli:Berteloot}.
This implies that 
\begin{equation}\label{equality slice}
\int_{\mathcal{X}^{\ell}_V}\left( \int_{X_{\pi(x)}}\phi \cdot  \bigwedge_{i=1}^r T_{f_{\ell+1},c_i}|_{X_{\pi(x)}}\right)  d\mu_{\ell}(x) = \al \int_{\mathcal{X}^{\ell}_V}\left( \int_{X_{\pi(x)}}\phi \cdot  \bigwedge_{i=2}^{r+1} T_{f_{\ell+1},c_i}|_{X_{\pi(x)}}\right)  d\mu_{\ell}(x)
\end{equation}
for every continuous and compactly supported function $\phi$ on $\mathcal{X}^{\ell+1}_V$.  Here, $\pi: \mathcal{X}^{\ell}_V\to V$ denotes the projection to the base.  (Strictly speaking, since the measure $\mu_\ell$ is not smooth, we should approximate $\mu_\ell$ by smooth and compactly-supported forms on $\cX_V^\ell$ to deduce \eqref{equality slice}; compare the proof of \cite[Theorem 1.8]{Mavraki:Schmidt}.)

For each $\lambda \in V$, set
$$\mu^{r+1}_{\lambda}:=T_{f_{\ell+1},c_1}\wedge\cdots\wedge T_{f_{\ell+1},c_r}|_{X_{\lambda}}$$
and 
$$\mu_\lambda^1=T_{f_{\ell+1},c_2}\wedge\cdots\wedge T_{f_{\ell+1},c_{r+1}}|_{X_{\lambda}}.$$

We infer from \eqref{equality slice} that 
\begin{align}\label{equality for all}
\mu^{r+1}_{\la}=\al \, \mu^{1}_{\la}
\end{align}
as measures on $X_{\la}$, for some constant $\alpha>0$ and for all $\lambda$ in the support of $\pi_*\mu_\ell$ in $V$. Indeed, suppose there exists $b \in V(\C)$ where \eqref{equality for all} doesn't hold, and so that $\pi_{*}\mu_{\ell}(U) >0$ for every open neighborhood $U$ of $b$.  Then we can find a continuous, non-negative, compactly supported function $\psi$ on $\mathcal{X}$ such that 
$$\int_{X_b} \psi \, d\mu^{r+1}_{b}(x) \neq \al \int_{X_b} \psi \, d\mu^{1}_{b}(x).$$  
By weak continuity of the measures $\mu^{r+1}_{\lambda}$ and $\mu_\lambda^1$ as $\lambda$ varies (since the bifurcation currents have continuous potentials), we find that 
$$\int_{X_{\la}} \psi \, d\mu^{r+1}_{\la}(x) \neq \al\int_{X_{\la}} \psi \, d\mu^{1}_{\la}(x)$$ 
for all $\la$ in a small neighborhood $U$ of $b$. Therefore, setting 
$$\phi(\la, t_1, \ldots, t_{\ell},t_{\ell+1}) = h_b(\la) \psi(\la,t_{\ell+1})$$ on $\mathcal{X}^{\ell+1}_V$ for a non-negative function $h_b$ supported in a small neighborhood of $b$, the equality \eqref{equality slice} will fail.

The remainder of the proof is devoted to deriving a contradiction from the equality \eqref{equality for all}.  We first sketch the idea. 
Recall from Proposition \ref{two measures} that we have parameter $x_1 = (\lambda_1, s_1, \ldots, s_\ell) \in \supp \mu_\ell$ so that the tuple of marked critical points $(c_1, \ldots, c_r)$ is transversely prerepelling for $F^{(\lambda_1)} = (f,\ldots, f)$ at $s_\ell$ over $X_{\lambda_1}$. This implies that $\mu_{\lambda_1}^{r+1}$ is nonzero on $X_{\lambda_1}$, with $s_\ell$ in its support by \cite{Buff:Epstein:PCF}.  Moreover, as in \cite[Section 3]{Buff:Epstein:PCF} and the generalization appearing in \cite[Proposition 3.2]{Gauthier:abscont}, we know that if we zoom in on the measure $\mu^{r+1}_{\lambda_1}$ on $X_{\lambda_1}$ at the parameter $s_\ell$ at an exponential rate in $n$ (determined by the multipliers of the repelling cycles in the forward orbits of the marked critical points), and rescale the measure appropriately we get a nontrivial limit; see \eqref{zoominprerepelling}. On the other hand, we know from Proposition \ref{two measures} that the critical point $c_{r+1}$ is attracted to a parabolic cycle at $s_{\ell}\in X_{\lambda_1}$.  Zooming into the measure $\mu_{\lambda_1}^1$ on $X_{\lambda_1}$ at $s_\ell$ at the same exponential rate (which is supposed to give a nonzero limit), we actually get 0 from the following lemma. This will produce our desired contradiction. 

\begin{lemma} \label{r zoom}
Suppose that $f: \bD^r \times \bP^1\to \bD^r \times \bP^1$ is a holomorphic family of maps with degree $d\ge 2$ parametrized by $\bD^r \subset \bC^r$ with a marked critical point $c: \bD^r \to \bP^1$.  Assume that $c(0)$ is attracted to a parabolic fixed point at $z=0$ for $f_0$.  Then for any constant $\rho > 1$, there exists $\delta>0$ so that the sequence of functions 
	$$\alpha_n(x) := f_{x/\rho^n}^n(c(x/\rho^n))$$
converges uniformly to 0 on the polydisk $(\bD_\delta)^r$, as $n\to\infty$.
\end{lemma}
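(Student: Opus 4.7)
The plan is to pass to a local model at the parabolic fixed point and exploit the key scale mismatch: the parameter perturbation $x/\rho^n$ decays exponentially in $n$, while we only iterate $n$ times, so the cumulative effect of the perturbation remains negligible compared to the slow, polynomial parabolic dynamics on the attracting petal.

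First I would reduce to dynamics inside an attracting petal. Since $c(0)$ lies in the basin of the parabolic point at $0$, there exists an integer $N$ and an attracting petal $P$ with $w := f_0^N(c(0)) \in P$. By continuity in $x$, for $|x|$ small, $f_x^N(c(x))$ is close to $w$ and hence lies in $P$. Thus, setting $x_n := x/\rho^n$ and $w_n := f_{x_n}^N(c(x_n))$, it suffices to show that $f_{x_n}^{n-N}(w_n) \to 0$ uniformly on $\bD_\delta^r$.

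Next I would establish uniform control on the derivatives $|(f_x^m)'(z)|$ on a slightly enlarged petal $\tilde P \supset P$, for $m \leq n$ and $|x| \leq \delta$. After passing to the $q$-th iterate to reduce to parabolic multiplier $1$ and placing $f_0$ in the normal form $z + z^{p+1}+\cdots$, classical parabolic dynamics give $|(f_0^m)'(z)| = O(m^{-(p+1)/p})$ on compact subsets of $P$, and in particular $f_0^m$ is uniformly Lipschitz on $\tilde P$. Using $|x_n|=O(\rho^{-n})$ and the stability of the petal under small perturbations of $f_0$, one then shows that for $\delta$ sufficiently small, $\tilde P$ remains forward invariant under $f_{x_n}$ for all $m\le n$, with a uniform Lipschitz bound $|(f_{x_n}^m)'(z)| \le M$. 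The core estimate is then a telescoping identity
\[
f_{x_n}^{n-N}(w_n) - f_0^{n-N}(w) = \bigl[f_{x_n}^{n-N}(w_n) - f_{x_n}^{n-N}(w)\bigr] + \sum_{k=0}^{n-N-1}\bigl[ f_{x_n}^{n-N-k-1}(f_{x_n}(f_0^k(w))) - f_{x_n}^{n-N-k-1}(f_0^{k+1}(w))\bigr],
\]
in which each of the $O(n)$ summands is bounded by $M\cdot \sup_{z\in\tilde P}|f_{x_n}(z)-f_0(z)|=O(|x_n|)$, giving total error $O(n\delta/\rho^n)\to 0$ uniformly. Since $f_0^{n-N}(w)\to 0$ as $w \in P$, we conclude $\alpha_n(x)\to 0$ uniformly on $\bD_\delta^r$.

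I expect the main obstacle to be the uniform Lipschitz control $|(f_x^m)'(z)| \le M$ for $x \in \bD_\delta^r$, $z \in \tilde P$, and $m \le n$: forward invariance of the petal is not automatic under perturbation, and parabolic implosion can in principle eject orbits from $P$ after roughly $1/|x|$ iterations. This is sidestepped by the inequality $|x_n| \le \delta/\rho^n$, so the relevant implosion timescale $\rho^n/\delta$ dwarfs the $n$ iterations we perform. Alternatively, one may construct approximate Fatou coordinates depending holomorphically on $x$ on a small polydisk, conjugating $f_x$ to $u \mapsto u + 1 + O(|x|/u)$ on a slit half-plane, in which the drift after $n$ iterations is manifestly $O(n|x_n|)=o(1)$.
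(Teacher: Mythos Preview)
Your strategy is sound and would ultimately succeed, but it takes a genuinely different and heavier route than the paper. The paper exploits a self-similar recursion that you did not notice: since $\alpha_{n-1}(x/\rho)=f_{x/\rho^n}^{\,n-1}(c(x/\rho^n))$, one has
\[
\alpha_n(x)=f_{x/\rho^n}\bigl(\alpha_{n-1}(x/\rho)\bigr).
\]
Differentiating and using only the elementary fact that $|f_x'(z)|\le \rho_\eps<\rho$ on a small bidisk around $(0,0)$ (because the multiplier at the parabolic point is $1$), the paper obtains
\[
|\nabla\alpha_n(x)|\le \frac{M}{\rho^n}+\frac{\rho_\eps}{\rho}\,\bigl|\nabla\alpha_{n-1}(x/\rho)\bigr|,
\]
and since $\rho_\eps/\rho<1$ this forces $|\nabla\alpha_n|\to 0$. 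Combined with $\alpha_n(0)\to 0$, uniform convergence follows. No petals, normal forms, or perturbation theory are needed; the entire parabolic input is that $|f_0'(0)|=1$.

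Your approach instead compares $f_{x_n}^m$ with $f_0^m$ via a telescoping sum, which requires a uniform Lipschitz bound $|(f_{x_n}^j)'|\le M$ for all $j\le n$ on a petal. You correctly flag this as the main obstacle, and it is a real one: $|f_0'(z)|$ is not bounded below $1$ near $0$, so the bound for $f_0$ comes from delicate asymptotics $|(f_0^m)'|\asymp m^{-(p+1)/p}$, and extending this to $f_{x_n}$ without circularity (the orbit location depends on the very Lipschitz bound you seek) needs either a bootstrapping induction or the approximate Fatou coordinates you mention. Both can be made to work, but neither is a one-line citation of ``stability of the petal.'' The paper's recursion sidesteps this entirely by never needing to control $(f_{x_n}^j)'$; the factor $1/\rho$ appearing from the chain rule in the $\alpha_n\leftrightarrow\alpha_{n-1}$ relation does all the contraction for you. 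What your approach buys is a clearer dynamical picture (you see explicitly that the error is $O(n\rho^{-n})$ coming from $n$ steps of size $\rho^{-n}$), at the cost of substantially more technical input.
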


\proof
Let $\rho>1$. Since $|f'_0(0)| = 1$, we can first fix a small $\eps>0$ and $\rho_\eps>1$ so that 
	$$\left|f_x'(z)\right| \leq \rho_\eps < \rho,$$
for all $x\in \bD^r_{\epsilon}$ and $z\in \bD_\eps$.  We replace $c$ with an iterate so that $c(0)$ and all its forward iterates under $f_0$ are in the disk $\bD_{\eps/2}$.  Choose $M>0$ so that the gradient of $f$ (with respect to parameter $x$) satisfies
   $$\left|\nabla_x f_x(z) \right| \leq M$$
for all  $x\in \bD^r$ and $z\in \bD_\eps$.  Note that
   $$\alpha_n(x)=f_{x/\rho^n}(\alpha_{n-1}(x/\rho)). $$
Taking derivatives, we have
\begin{eqnarray} \label{deriv}
|\nabla \alpha_n(x)|&=&\left|\frac{1}{\rho^n}\cdot (\nabla_w f_w) (\alpha_{n-1}(x/\rho))+\frac{1}{\rho}  f_{x/\rho^n}'(\alpha_{n-1}(x/\rho)) \nabla_{x/\rho} \alpha_{n-1}(x/\rho) \right| \nonumber \\
    &\leq& \frac{M}{\rho^n} +\frac{\rho_\eps}{\rho}\, |\nabla_{x/\rho}\alpha_{n-1}(x/\rho)|
 \end{eqnarray}
as long as $\alpha_{n-1}(x/\rho)$ lies in $\bD_\eps$.
 
Now choose $n_0$ so that 
   $$\frac{M}{\rho^{n_0}}<\frac{\rho_\eps}{\rho}.$$ 
We take a very small $\delta_1$ so that 
   $$\alpha_{n_0-1}(x)\in \bD_{\eps}$$
for all $x \in \bD_{\delta_1}^r$, and set 
   $$M_1:=\max_{x\in \bD_{\delta_1}^r}|\nabla\alpha_{n_0-1}(x)|.$$
From the inequality \eqref{deriv}, we have
   $$|\nabla\alpha_{n_0}(x)|  \;\leq \; \frac{M}{\rho^{n_0}} + \frac{\rho_\eps}{\rho}\, |\nabla_{x/\rho}\alpha_{n_0-1}(x/\rho)| \;\leq \;  \frac{\rho_\eps}{\rho}+ \frac{\rho_\eps}{\rho} M_1$$
for each $x\in \bD_{\delta_1}^r$. Let 
	$$M_2:= \frac{\rho_\eps}{\rho}+ \frac{\rho_\eps}{\rho} M_1,$$ 
and define $M_i$ recursively by
   $$M_{i+1}=\frac{\rho_\eps}{\rho^i} + \frac{\rho_\eps}{\rho}M_i$$
for $i\geq 2$.  Note that $M_i \to 0$ as $i\to \infty$, because 
	$$M_i = \left(\frac{\rho_\eps}{\rho}\right)^{i-1}(M_1+1) + \frac{1}{\rho^i} \sum_{j=1}^{i-2} \rho_\eps^j.$$

Finally, we can show inductively that our derivative estimate \eqref{deriv} holds when applied to $\alpha_n$ for all $n > n_0$ on a sufficiently small disk near 0.  This will imply that 
   $$|\nabla\alpha_{n_0+i}(x)|<\frac{M}{\rho^{n_0+i}}+\frac{\rho_\eps}{\rho}\, |\nabla\alpha_{n_0+i-1}(x/\rho)| \leq     \frac{\rho_\eps}{\rho^{i+1}}+\frac{\rho_\eps}{\rho}M_{i+1} = M_{i+2}$$
for all $x$ in a small disk around 0.  We know it holds for $i=0$ for all $|x| < \delta_1$.  We take $M_0:=\max_{i\geq 2} M_i$, and set 
    $$\delta:=\min (\delta_1, \eps/(2M_0), \eps).$$
 As $\alpha _{n_0}(0)\in \bD_{\eps/2}$, the inequality \eqref{deriv} gives $|\nabla \alpha_{n_0}(x)|<M_2\leq M_0$, so that $\alpha_{n_0}(x)$ is in $\bD_{\eps}$ for each  $x\in \bD^r_{\delta}$. By induction, we see that if $\alpha_{n_0+i-1}(x)$ is in $\bD_{\eps}$ with $|\nabla\alpha_{n_0+i-1}|\leq M_{i+1}$, then $\alpha_{n_0+i}(x)$ is in $\bD_{\eps}$, because $\alpha_{n_0+i}(0)$ is in $\bD_{\eps/2}$ and $\delta<\eps/(2M_0)$. Since $M_i\to 0$ as $i$ tends to infinity, we see that $\alpha_n$ converges to $0$ uniformly on  $\bD^r_\delta$. 
\qed

\bigskip
We now return to our proof.  Abusing the notation slightly, we write $\{f_s\}_{s\in X_{\lambda_1}}$ to denote the restriction of the family of maps to $X_{\lambda_1}$, $c_{r+1}$ for the restriction of $c_{r+1}$ to $X_{\lambda_1}$, and $T_{c_{r+1}}$ its associated bifurcation current on $X_{\lambda_1}$. We know from Lemma \ref{r zoom} that, zooming into $s_\ell$ in $X_{\lambda_1}$ at any exponential rate, the graphs of iterates of $c_{r+1}$ are converging to the graph of a constant function.  

Let $p_0\in\bP^1$ be the parabolic periodic point of $f_{s_\ell}$; replacing $f_{s_\ell}$ with an iterate, we have $f_{s_\ell}^n(c_{r+1}(s_\ell))\to p_0$ as $n\to\infty$.  After a change of coordinates if necessary, we may assume that $c_{r+1}(s_\ell)$ and $p_0$ are not $\infty$. 
Recalling the definition of the bifurcation current associated to $c_{r+1}$, we can work in homogeneous coordinates on $\bP^1$ over a neighborhood of $s_\ell$ in $X_{\lambda_1}$. We define
	$$G_s(X,Y) := \lim_{i\to \infty}\frac{1}{d^i}\log\|F_{s}^{i}(X,Y)\|,$$
with a choice of homogeneous lift $F_{s}$  for $f_{s}$ normalized so that $G_{s_\ell}(p_0,1)= 0$. Note that $G_s(c_{r+1}(s), 1)$ is  a potential  of the bifurcation current $T_{c_{r+1}}$ on a neighborhood of $s_\ell\in X_{\lambda_1}$. As long as $f^n_s(c_{r+1}(s))$ is not $\infty$, which is always true when $s$ is in $ \mathbb{D}(s_\ell, \delta/\rho^n)$ for small $\delta>0$, any $\rho>1$ and big $n$ by Lemma \ref{r zoom}, we define
   $$U_n(s):=G_s(f^n_s(c_{r+1}(s)),1)$$
so that 
	$$dd^c U_n(s)=d^n\cdot dd^c G_s(c_{r+1}(s), 1) = d^n \, T_{c_{r+1}}$$
where defined, as $G_s(F^n_s(c_{r+1}(s),1)) = d^n\cdot G_s(c_{r+1}(s),1)$ differs from $U_n(s)$ by a harmonic function.

Let $r_0$ be any biholomorphic map from $\bD^{r}$ to a neighborhood of $s_\ell\in X_{\lambda_1}$, with $r_0(0)=s_\ell$.  From Lemma \ref{r zoom} and the choice of our lift, we deduce that, for any $\rho>1$, the functions $U_n\circ r_0(y/\rho^n)$ converge uniformly to $G_{s_{\ell}}(p_0,1)=0$  on a small neighborhood of $(0, \ldots, 0)\in \bD^{r}$. As a consequence,  for any $\rho_i\in \C$ with $|\rho_i|>1$ ,  we can pick some $\rho>1$ with $\rho< \min_{1\leq i\leq {r}} (|\rho_i|)$ so that the sequence of functions 
\begin{equation}\label{zoomed seq}
   U_n\circ r_0(y_1/\rho_1^n, \ldots, y_{r}/\rho_{r}^n)=U_n\circ r_0(A_n(y)/\rho^n) \to 0
\end{equation}
uniformly, as $A_n(y):=(y_1\cdot\rho^n/\rho_1^n, \ldots, y_{r}\cdot\rho^n/\rho_{r}^n)$ converges to zero uniformly for $n\to \infty$.  

Following the notation of \cite[Proposition 3.2]{Gauthier:abscont}, we now let $r_n: \bD^r \to X_{\lambda_1}$ be the sequence of maps that define the zooms near the parameter $s_\ell$ for the measure $\mu^{r+1}_{\lambda_1}$ corresponding to the transversely prerepelling critical point at $x_1$, as described before Lemma \ref{r zoom}. From \cite[Proposition 3.2]{Gauthier:abscont} we know that there is a measure $\mu\neq 0$ (which can be described explicitly) such that
\begin{equation}\label{zoominprerepelling}
d^{nr}\,(r_n)^{*}(\mu^{r+1}_{\lambda_1})\to \mu,
\end{equation}  
uniformly as $n\to\infty$. On the other hand, it follows from \eqref{zoomed seq} that the functions $U_n\circ r_n$ also converge uniformly to 0. We conclude that $d^n \, (r_n)^{*} T_{c_{r+1}} \to 0$ weakly, and therefore 
	$$d^{nr} \, (r_n)^{*} \mu^1_{\lambda_1} \to 0.$$
Therefore, $\mu^1_{\lambda_1}$ cannot be proportional to $\mu^{r+1}_{\lambda_1}$, contradicting \eqref{equality for all}. This completes the proof of Theorem \ref{relative}.

\begin{remark}
The proof would simplify if we knew that the $(r+1)$-th critical point $c_{r+1}$ were preperiodic to a parabolic cycle, instead of only in the basin of that cycle. We could employ ideas that appeared in \cite[Proposition 5.2]{Ghioca:Nguyen:Ye:DMM1} (written for measures in the dynamical plane) or the proof of \cite[Proposition 5.12]{Favre:Gauthier:book} (for $1$-parameter families of polynomials and marked points). But the critical point $c_{r+1}$ is sometimes the only ``free" critical point and must therefore have an infinite orbit in the presence of a parabolic cycle.  
\end{remark}


\bigskip
\section{Proofs of Theorems \ref{main1}, \ref{main2}, \ref{main1Bogomolov}, and \ref{main2Bogomolov}}
\label{main proofs}

An elementary, but important, geometric observation is the following \cite[Lemma 4.3]{Gao:Ge:Kuhne}:  If $\Sigma$ is a subset of a complex projective algebraic variety $X$, equipped with an ample line bundle $L$, and if the product $\Sigma^M$ lies in a proper subvariety $Y \subset X^M$ for some integer $M\geq 1$, then $\Sigma$ must be contained in a proper subvariety $Z$ of $X$, where the number of components of $Z$ and the degree of each  are bounded by constants depending only on $M$, $\dim X$, the degree of $X$, and the sum of the degrees of the components of $Y$. Here, we measure degree in $X^M$ with respect to the ample line bundle $p_1^*L \otimes \cdots \otimes p_M^*L$ on $X^M$, where $p_i: X^M \to X$ is the projection to the $i$-th factor.

 Fix degree $d\geq 2$. For positive integers $r$ and $D$, we will apply Theorem \ref{relative} to families $\cX \to V$ over a subvariety $V \subset \Ch(\M_d, r,D)$, all defined over $\Qbar$.  We work inductively on the dimension $r$ of the fibers in the family.  

First suppose that $\cC \to V$ is a family of curves in $\M_d$ of degree $\leq D$, with irreducible $V \subset \Ch(\M_d, 1, D)$ defined over $\Qbar$ and of dimension $\ell\geq 1$, and assume that the generic fiber of $\cC \to V$ is geometrically irreducible.  From Theorem \ref{relative} applied to the fiber power $\cC_V^{\ell+1}$, we know that there exists an $\eps = \eps(\cC)$ so that points $(\lambda, f_1, \ldots, f_{\ell+1}) \in \cC_V^{\ell+1}(\Qbar)$ with $\sum_i \hcrit(f_i) < \eps\max\{1,h_{\Ch}(\lambda)\}$ are not Zariski dense.  We let $Z_\eps$ denote the Zariski closure of this set in $\cC_V^{\ell+1}$, over $\Qbar$.  It follows that there is a Zariski open $U \subset V$ (defined over $\Qbar$) so that $Z_\eps \cap C_\lambda^{\ell+1}$ is a proper algebraic subvariety of $(C_\lambda)^{\ell+1}$ for all $\lambda \in U(\C)$, with its number of irreducible components bounded over $\lambda \in U(\C)$. It is worth noting that all points $(\lambda, f_1, \ldots, f_{\ell+1}) \in \cC_V^{\ell+1}(\C)$ where each $f_i$ is in $\M_d(\Qbar)$ with $\sum_i \hcrit(f_i) < \eps$ must lie in $Z_\eps$, also for $\lambda \in V(\C) \setminus V(\Qbar)$; indeed, any such $(\ell+1)$-tuple must lie in a family of curves in $V$ defined over $\Qbar$ that includes this $C_\lambda$.

We now apply \cite[Lemma 4.3]{Gao:Ge:Kuhne} to the set 
	$$\Sigma_{\lambda,\eps} := \left\{f \in C_\lambda \cap \M_d(\Qbar): \hcrit(f) <\frac {\eps}{\ell+1}\max\{1,h_{\Ch}(\la)\}\right\} \subset C_\lambda$$
for $\lambda \in U(\Qbar)$, and to 
	$$\Sigma_{\lambda,\eps} := \left\{f \in C_\lambda \cap \M_d(\Qbar): \hcrit(f) <\frac {\eps}{\ell+1}\right\} \subset C_\lambda$$
for $\lambda \in U(\C)\setminus U(\Qbar)$, so that $\Sigma_{\lambda,\eps}^{\ell+1}\subset Z_{\eps}\cap C^{\ell+1}_{\lambda}$ for all $\lambda \in U(\C)$. 
We deduce the existence of a constant $N$ so that $|\Sigma_{\lambda,\eps}| \leq N$ for all $\lambda \in U(\C)$. Note here that we have used the fact that the number of components of $Z_{\eps}\cap C^{\ell+1}_{\la}$ and the degree of each is uniformly bounded for $\lambda\in U(\C)$ (controlled by the geometry of its generic fiber). Note also that if $0\le \eps'<\eps$, then $\Sigma_{\lambda,\eps'}\subset \Sigma_{\lambda,\eps}$ and hence $|\Sigma_{\lambda,\eps'}|\leq N$ for each $\lambda \in U(\C)$. 

 We then repeat the argument with the families of curves over the positive-dimensional components in the complement of $V\setminus U$; we work with irreducible components so that the generic fibers of these families will be (geometrically) irreducible.  Note that each family we treat is defined over $\Qbar$.  We increase the bound $N$ and decrease $\epsilon$ as needed. We are left with a finite set $S$ in $V(\Qbar)$ containing all of the irreducible PCF-special curves with degree $\leq D$ as well as all curves in $V$ with infinitely many points of critical height $< \frac{\eps}{\ell+1}\max\{1,h_{\Ch}(\la)\}$ (or $< \frac{\eps}{\ell+1}$ over complex parameters).
This proves Theorems \ref{main2} and \ref{main2Bogomolov} for the curves in $V$. If for any of the curves $C_\lambda$ in this leftover finite set $S \subset V(\Qbar)$, we have $N<|\Sigma_{\lambda,\eps}|<\infty$, we just increase the bound $N$ as needed so that Theorem \ref{main1Bogomolov} and \ref{main2} hold for all curves in $V$. 
We repeat the argument for all irreducible components $V$ of $\Ch(\M_d, 1, D)$ with geometrically irreducible generic fiber and adjust the constants $\eps,~N$ as needed. This proves Theorems \ref{main1}, \ref{main2}, \ref{main1Bogomolov} and \ref{main2Bogomolov} for curves. 

Now fix $r>1$ and suppose we know the conclusion of Theorems \ref{main1}, \ref{main2}, \ref{main1Bogomolov}, and \ref{main2Bogomolov} for subvarieties in $\M_d$ of dimension $<r$.  Consider any family $\cX \to V$ of $r$-dimensional varieties in $\M_d$ over an irreducible and quasiprojective $V \subset \Ch(\M_d, r, D)$ defined over $\Qbar$ and of dimension $\ell\geq 1$ for which the generic fiber of $\cX \to V$ is geometrically irreducible.

As in the case of curves, we infer from Theorem \ref{relative} that there exists $\eps = \eps(\cX)>0$ so that all points $(\lambda, f_1, \ldots, f_{\ell+1}) \in \cX_V^{\ell+1}(\Qbar)$ where each $f_i$ is in $\M_d(\Qbar)$ with $\sum_i \hcrit(f_i) < \eps\max\{1,h_{\Ch}(\la)\}$ lie in a proper closed subvariety $Z_\eps$ in $\cX_V^{\ell+1}$. Therefore, as before, from \cite[Lemma 4.3]{Gao:Ge:Kuhne}, there is a Zariski-open subset $U\subset V$ (defined over $\Qbar$) and constants $N_1$ and $B_1$ so that the points of 
	$$\Sigma^r_{\lambda,\eps} := \left\{f \in X_\lambda \cap \M_d(\Qbar): \hcrit(f) < \frac{\eps}{\ell+1}\max\{1,h_{\Ch}(f)\}\right\} \subset X_\lambda$$
for $\lambda \in U(\Qbar)$, and of 
	$$\Sigma^r_{\lambda,\eps} := \left\{f \in X_\lambda \cap \M_d(\Qbar): \hcrit(f) < \frac{\eps}{\ell+1}\right\} \subset X_\lambda$$
for $\lambda \in U(\C)\setminus U(\Qbar)$, lie in a proper algebraic subvariety $Z_{(\lambda)} \subset X_\lambda$ which has $\leq N_1$ irreducible components, each with degree $\leq B_1$, for all $\lambda \in U(\C)$.  Note that $\Sigma^r_{\lambda,\epsilon}$ is not necessarily Zariski dense in $Z_{(\lambda)}$, so we apply the induction hypotheses (specifically, the constants from each of the statements of Theorems \ref{main1}, \ref{main2}, \ref{main1Bogomolov}, \ref{main2Bogomolov} for subvarieties of dimension $< r$) to each irreducible component $Y_{(\lambda)}$ of $Z_{(\lambda)}$, for all $\lambda \in U(\C)$.  Thus, by reducing $\eps$ if needed, we conclude that the Zariski-closure of $\Sigma^r_{\lambda,\eps}\cap Y_{(\la)}$ has uniformly bounded number of components, each with uniformly bounded degree for all $\lambda \in U(\C)$ . Further, adjusting these uniform bounds if needed, using the induction hypothesis, we can arrange that they also bound the geometry of the Zariski closure of $\Sigma^r_{\lambda, \eps'}\cap Y_{(\la)}\subset\Sigma^r_{\lambda, \eps}\cap Y_{(\la)}$ for all $0\le \eps'<\eps$ (and in particular of $Y_{(\lambda)} \cap \PCF$).  Combined with the initial bound of $N_1$ on the number of components of $Z_{(\lambda)}$, repeating this procedure  provides uniform bounds on the Zariski closures of $\Sigma^r_{\lambda,\eps'}$ for all $\eps'<\eps$ and of $\overline{X_\lambda \cap \PCF}$ for all $\lambda \in U(\C)$.

We now repeat the argument over each of the irreducible components of the complement $V \setminus U$ corresponding to families with geometrically irreducible generic fibers, and we note that each family we treat is defined over $\Qbar$. Finally, we are left with a finite set $S$ in $V(\Qbar)$ consisting of $r$-dimensional varieties that are either PCF-special or contain a Zariski-dense set of points in $\Sigma^r_{\lambda, \eps}$.  We repeat the arguments above for curves, reducing $\eps$ and increasing $B$ and $N$ as needed, once again appealing to the induction hypotheses.  We have thus completed the proofs of Theorems \ref{main1}, \ref{main2}, \ref{main1Bogomolov}, and \ref{main2Bogomolov}.

\bigskip \bigskip
\def\cprime{$'$}

\end{document}